\documentclass[11pt,a4paper]{article}
\usepackage[T1]{fontenc}
\usepackage{lmodern}
\usepackage{amsmath,amssymb,amsthm,mathtools}
\usepackage[left=27mm,right=27mm,top=27mm,bottom=25mm]{geometry}
\usepackage{microtype}
\usepackage{fancyhdr}
\usepackage[hidelinks,hyperfootnotes=false]{hyperref}
\allowdisplaybreaks
\numberwithin{equation}{section}
\newtheorem{theorem}{Theorem}[section]
\newtheorem{lemma}[theorem]{Lemma}
\newtheorem{proposition}[theorem]{Proposition}
\newtheorem{corollary}[theorem]{Corollary}
\theoremstyle{definition}
\newtheorem{definition}[theorem]{Definition}
\theoremstyle{remark}
\newtheorem{remark}[theorem]{Remark}
\newcommand{\vp}{\dot\partial}
\newcommand{\C}{\mathbb C}
\newcommand{\R}{\mathbb R}
\newcommand{\B}{\mathbb B}
\newcommand{\D}{\mathbb D}
\newcommand{\tr}{\operatorname{tr}}
\newcommand{\id}{\operatorname{id}}
\newcommand{\dd}{\mathrm d}
\newcommand{\Mt}{\widetilde M}
\title{Rigidity of K\"{a}hler--Finsler Metrics\\with Constant Radial Curvature}
\author{Jinling Li, Longjie Li, and Chunhui Qiu}
\date{}
\begin{document}
\maketitle
\thispagestyle{plain}
\begin{abstract}
In this paper, we prove that a smooth strongly pseudoconvex
K\"ahler--Finsler metric $F$ satisfying the constant  radial curvature condition 
\[
\Omega(\chi,\bar\chi)\chi=cF^2\chi,
\qquad c\in\mathbb R\setminus\{0\},
\]
is induced by a Hermitian K\"ahler metric of holomorphic sectional
curvature $2c$. Under the Abate--Patrizio curvature symmetry
\[
\langle\Omega(\cdot,\bar\chi)\chi,\chi\rangle
=\langle\Omega(\chi,\bar\chi)\cdot,\chi\rangle,
\]
this result gives Hermitian rigidity for nonzero constant holomorphic
sectional curvature and the classification of complete simply connected
metrics of negative curvature. As an application, we show that the
Kobayashi metric of a bounded strongly convex domain with smooth
boundary is K\"ahler--Finsler if and only if the domain is biholomorphic
to the unit ball. We also prove that the universal cover of a bounded
strongly pseudoconvex domain $D $ with smooth
boundary is biholomorphic to the unit ball if and only if its Kobayashi
metric is a smooth strongly pseudoconvex K\"ahler--Finsler metric.
Furthermore, we give two explicit examples to illustrate the distinction between the weakly
K\"ahler--Finsler and K\"ahler--Finsler conditions.

\end{abstract}
\begingroup
\renewcommand{\thefootnote}{}
\footnotetext{2020 Mathematics Subject Classification. 53C60, 32Q45, 32F45.}
\footnotetext{Keywords. K\"ahler--Finsler metric; holomorphic sectional curvature;
Kobayashi metric; strongly pseudoconvex domain; K\"ahler metric.}
\footnotetext{This work is Supported by the Natural Science Foundation of
Fujian Province (Grant No.\ 2025J01350) and the National Natural Science
Foundation of China (Grant Nos.\ 12431005, 12371081).}
\endgroup
\section{Introduction}
The classification of complete simply connected K\"ahler manifolds of
constant holomorphic sectional curvature is a classical result in
complex geometry (\cite{KN}). In complex Finsler geometry,
Abate and Patrizio \cite{AP97} obtained classification results for
K\"ahler--Finsler metrics of nonnegative constant holomorphic curvature
under an additional curvature symmetry. 
In this paper, we consider smooth strongly pseudoconvex
K\"ahler--Finsler metrics satisfying the radial curvature identity
\eqref{eq:1.1} with $c\ne0$. We prove that every such metric is induced
by a Hermitian K\"ahler metric of holomorphic sectional curvature $2c$.
This rigidity result is local and applies to both positive and negative
$c$. In the complete simply connected case, it yields the corresponding
classification by the classical K\"ahler space forms.

Write \(G=F^2\), let \(\Omega\) be the curvature operator of the Chern--Finsler
connection, and let \(\chi=v^j\delta_j\) be its horizontal radial field.
Our first main result is as follows.

\begin{theorem}\label{thm:main}
Let \(F\) be a smooth strongly pseudoconvex K\"ahler--Finsler metric on a complex
manifold \(M\). If
\begin{equation}\label{eq:1.1}
\Omega(\chi,\bar\chi)\chi=cG\chi
\quad\text{on }T^{1,0}M\setminus\{0\},\qquad c\in\R\setminus\{0\},
\end{equation}
then \(F\) is induced by a Hermitian K\"ahler metric of holomorphic sectional
curvature \(2c\).
\end{theorem}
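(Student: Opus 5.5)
The plan is to show that the Cartan tensor
\[
C_{ij\bar k}=\frac{\partial^3 G}{\partial v^i\partial v^j\partial\bar v^k}
\]
vanishes identically. If $C\equiv0$, then $G_{i\bar j}=\partial^2G/\partial v^i\partial\bar v^j$ is independent of $v$ on each fibre. By homogeneity $G=G_{i\bar j}(z)v^i\bar v^j$, so $F$ is induced by a Hermitian metric $h=G_{i\bar j}\,dz^i\otimes d\bar z^j$.

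Once $F$ is Hermitian, the Chern--Finsler connection reduces to the Chern connection of $h$. The Kähler--Finsler hypothesis (vanishing horizontal torsion $\Gamma^i_{jk}=\Gamma^i_{kj}$) then says exactly that $h$ is Kähler. Finally, \eqref{eq:1.1} paired with $\chi$ gives $\langle\Omega(\chi,\bar\chi)\chi,\chi\rangle=cG^2$. This is the holomorphic curvature of $F$, which under the paper's normalization corresponds to holomorphic sectional curvature $2c$ of $h$. I will check this factor $2$ against the curvature convention fixed earlier.

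To prove $C\equiv0$, write \eqref{eq:1.1} in local coordinates with the Chern--Finsler curvature $R_{\bar m j k\bar l}=-\delta_{\bar l}(\Gamma_{\bar m j;k})$-type coefficients:
\[
R_{\bar m j k\bar l}\,v^j v^k\bar v^l=cG\,G_{\bar m}.
\]
I then apply vertical derivatives $\partial/\partial v^p$ (and possibly $\partial/\partial\bar v^q$) to both sides. On the right this produces $cG_pG_{\bar m}+cG\,G_{p\bar m}$, and after a second differentiation terms involving $cG\,C_{p q\bar m}$.

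On the left I use the Kähler--Finsler structure equations. In the Kähler case the vertical derivative of the horizontal Christoffel symbols is governed by the Cartan tensor, $\dot\partial_p\Gamma^i_{;k}=\Gamma^i_{pk}$. The vertical derivative of the curvature is then expressed through horizontal covariant derivatives of $C$ (Abate--Patrizio's identity $\dot\partial_pR_{\bar m j k\bar l}$ in terms of $C_{\cdot\cdot\bar\cdot|\bar l}$ plus $C\cdot R$ terms). Contracting with $v^j v^k\bar v^l$ and using $C_{ij\bar k}v^i=0$ kills most terms. The expected outcome is a first-order identity of the form
\[
C_{pq\bar m|\bar l}\,\bar v^l \;=\; \lambda\,c\,G\,C_{pq\bar m}
\]
for an explicit nonzero constant $\lambda$: the horizontal derivative of the Cartan tensor along $\bar\chi$ is a nonzero multiple of $C$.

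The second ingredient is an independent identity for $C_{pq\bar m|\bar l}\bar v^l$ coming from the Kähler condition alone. Using the conjugated Kähler symmetry and the homogeneity of $C$ (degree $0$ in $\bar v$ after suitable contraction), this quantity should be expressible as a total $\bar v$-contraction that vanishes, or at least carries a different homogeneity weight. Comparing the two identities gives $c\,C=0$, hence $C=0$ since $c\neq0$; this is exactly where $c\ne0$ is used.

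I expect the main obstacle to be this vertical differentiation of the curvature. One must track the many contraction terms carefully and identify precisely which Bianchi-type identities of the Kähler--Finsler setting make the left side collapse to a multiple of $C$. If the single-derivative route leaves a residual term, the fallback is a convexity argument on each fibre. Set $\phi=\log G$ restricted to a fibre $T_zM\setminus\{0\}$. The derived identities should show that $\phi$ has constant complex Hessian in the projectivized fibre, hence equal to the Fubini--Study potential of a Hermitian form, which again forces $G$ to be quadratic.
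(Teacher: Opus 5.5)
Your outer steps are fine and agree with the paper: once $g_{i\bar j}$ is shown to be independent of $v$, $G$ is Hermitian, the torsion condition makes $h$ K\"ahler, and \eqref{eq:1.1} gives holomorphic sectional curvature $2c$. The gap is the core step $C\equiv0$. You only describe it as an expected outcome, and neither of the two identities you anticipate can carry it.

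First, with your normalisation $C_{pq\bar m}$ is homogeneous of bidegree $(-1,0)$ in $(v,\bar v)$. So $\bar v^lC_{pq\bar m|\bar l}$ has bidegree $(-1,1)$, whereas $cG\,C_{pq\bar m}$ has bidegree $(0,1)$. An identity $\bar v^lC_{pq\bar m|\bar l}=\lambda cG\,C_{pq\bar m}$ would therefore force both sides to vanish just by comparing $v$ with $2v$. It already contains the conclusion, and it cannot be the output of vertically differentiating the bidegree-consistent identity $R_{\bar mjk\bar l}v^jv^k\bar v^l=cGG_{\bar m}$.

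Second, contract your ``K\"ahler-only'' identity with $\bar v^m$ and use \eqref{eq:3.15}. In any coordinates this gives
\[
\bar v^m\bar v^lC_{pq\bar m|\bar l}=\bar\chi A_{pq}=g_{p\bar a}\overline{B^a_{\bar q}},\qquad A_{ij}=G_{ij},\quad B^i_{\bar\ell}=\vp_{\bar\ell}S^i .
\]
So it would assert that the spray of every K\"ahler--Finsler metric is holomorphic in $v$. That is a Berwald-type rigidity for which you give no argument; in the paper $B=0$ appears only at the end, as a consequence of the curvature hypothesis. The fallback via $\log G$ on the fibres likewise rests on identities that are never derived.

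What is actually available is not an eigen-equation for $C$. It is a coupled first-order system between $A$ and $B$, the part of the spray that is not holomorphic in $v$. In the normal coordinates of Lemma~\ref{lem:normal}, $\bar\chi A=\overline B$ follows from $\delta_{\bar k}G_i=0$. Next, differentiate $\bar\chi S^i=-cGv^i$ in $\bar v^\ell$ using $\vp_kS^i=2N^i_k$, and combine this with the formula \eqref{eq:3.6} for $\delta_{\bar k}S^i$ from Lemma~\ref{lem:M}; this gives $\bar\chi B=cG\,\overline A$.

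The paper closes this system with the algebraic K\"ahler identity $AB=0$ of Lemma~\ref{lem:AB}. Applying $\bar\chi$ to it yields $B^*B=-cG\,A\overline A$. For $c>0$, positivity then gives $A=B=0$. For $c<0$ one uses $AB=0$ a second time: $0=\tr(ABB^*A^*)=-cG\tr\bigl((AA^*)^2\bigr)$, so $A=0$ and hence $C_{ij\bar k}=\vp_{\bar k}A_{ij}=0$. Your scheme has no counterpart of $AB=0$ or of this positivity/trace step, and that is exactly where $c\ne0$ is used, for both signs.
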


For a K\"ahler--Finsler metric, Abate and Patrizio \cite{AP97}
showed that \eqref{eq:1.1} is equivalent to the condition $K_F=2c$
together with the curvature symmetry
\begin{equation}\label{eq:1.2}
\langle\Omega(H,\bar\chi)\chi,\chi\rangle_g
=\langle\Omega(\chi,\bar\chi)H,\chi\rangle_g
\qquad(H\in\mathcal H),
\end{equation}
where $\mathcal H$ denotes the horizontal bundle. For \(c>0\), Hermitian rigidity follows from the local argument
in the proof of Theorem~1.1  in  \cite{AP97}.
Aikou \cite{Ai95} obtained related rigidity results for complex
manifolds modeled on a complex Minkowski space.
As a consequence of Theorem~\ref{thm:main}, we obtain the following
classification.
\begin{theorem}\label{thm:classification}
Let $F$ be a smooth strongly pseudoconvex K\"ahler--Finsler metric on a
connected complex manifold $M$ of dimension $n$ with $K_F=2c\ne0$,
satisfying \eqref{eq:1.2}. Then $F$ is induced by a Hermitian
K\"ahler metric. If, in addition, $c<0$, $M$ is simply connected,
and  $F$ is complete, then
\[
(M,F)\cong
\left(\B^n,\sqrt{-\frac2c}\,\kappa_{\B^n}\right)
\]
by a biholomorphic isometry. 
\end{theorem}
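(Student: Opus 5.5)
The first assertion follows by combining two results already stated. By the Abate--Patrizio equivalence recalled after Theorem~\ref{thm:main}, a K\"ahler--Finsler metric with $K_F=2c$ that satisfies the symmetry \eqref{eq:1.2} satisfies the radial identity \eqref{eq:1.1} with the same constant $c$. Since $c\ne0$, Theorem~\ref{thm:main} then gives a Hermitian K\"ahler metric $h$ on $M$ with $F(v)^2=h(v,v)$ for all $v\in T^{1,0}M$, and $h$ has holomorphic sectional curvature $2c$.

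We have to check that the normalizations agree. Because $G=F^2$ is quadratic, the Finsler holomorphic curvature $K_F$ reduces to the holomorphic sectional curvature of $h$, so the constant $2c$ is preserved. Likewise, Finsler completeness of $F$ is equivalent to Riemannian completeness of the underlying metric $\operatorname{Re}h$: both have the same length functional for curves, hence the same distance.

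For the second assertion, let $c<0$, and suppose $M$ is simply connected and $F$ is complete. Then $(M,h)$ is a complete, simply connected K\"ahler manifold of constant holomorphic sectional curvature $2c<0$. The classical uniqueness of complex space forms \cite{KN} gives a holomorphic isometry onto $\B^n$ equipped with the Bergman-type metric of holomorphic sectional curvature $2c$. This uniqueness comes from the Cartan--Ambrose--Hicks theorem applied to the curvature tensor, which is determined by the holomorphic sectional curvature.

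It remains to match the model metric on the ball with a multiple of $\kappa_{\B^n}$. The Kobayashi metric of $\B^n$ is Hermitian, given by the Poincar\'e-type form $\kappa_{\B^n}(z;v)^2=\frac{|v|^2}{1-|z|^2}+\frac{|\langle z,v\rangle|^2}{(1-|z|^2)^2}$, and its holomorphic sectional curvature is a fixed negative constant, namely $-4$ under the paper's convention, where $K$ is computed for $G=F^2$ as in $\Omega(\chi,\bar\chi)\chi=cG\chi\Leftrightarrow K=2c$. Scaling a metric by $\lambda^2$ divides its curvature by $\lambda^2$. Hence $\lambda\kappa_{\B^n}$ has curvature $-4/\lambda^2$, and setting $-4/\lambda^2=2c$ gives $\lambda=\sqrt{-2/c}$, which is the constant in the statement.

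The main obstacle is this curvature-constant bookkeeping. I would check it directly on the one-dimensional disc, where $\kappa_\D=|dz|/(1-|z|^2)$, by computing $K_F$ with the paper's normalization of $\Omega$ and $K_F$ and confirming the value $-4$. The composition of isometries is biholomorphic and carries $F$ to $\sqrt{-2/c}\,\kappa_{\B^n}$.
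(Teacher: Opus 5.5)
Your proposal is correct and follows essentially the same route as the paper: the Abate--Patrizio equivalence (Proposition~\ref{prop:AP}) turns $K_F=2c$ together with \eqref{eq:1.2} into the radial identity \eqref{eq:1.1}, Theorem~\ref{thm:main} yields a Hermitian K\"ahler metric $h$ with $F^2=h$ and holomorphic sectional curvature $2c$, completeness passes from $F$ to $h$, and the Kobayashi--Nomizu classification combined with the scaling relation $-4/a^2=2c$ gives $a=\sqrt{-2/c}$. Your planned check of the value $-4$ on the disc agrees with the computation in Lemma~\ref{lem:retract}, where $R^i=-2Gv^i$ gives $K=-4$.
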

Our first application concerns bounded strongly convex domains with
smooth boundary. By Lempert's theory  (\cite{Lempert81,Lempert82}), their
Kobayashi metrics are smooth and strongly pseudoconvex. They are weakly
K\"ahler--Finsler and satisfy \eqref{eq:1.1} with $c=-2$
(\cite{AP,AP96}). We give a direct proof using holomorphic retracts.
Ge \cite{Ge} showed that, for \(t>0\), the Kobayashi metric of
\[
\Omega_t
=\{(z_1,z_2)\in\C^2:
|z_1|^2+|z_2|^2+t|z_1|^4<1\}
\]
is smooth and weakly K\"ahler--Finsler, but is not
K\"ahler--Finsler. 
\begin{theorem}\label{thm:convex}
Let \(\kappa_D\) be the Kobayashi metric on a bounded strongly convex domain
\(D\Subset\C^n\) with \(C^\infty\) boundary. Then \(\kappa_D\) is a smooth strongly pseudoconvex weakly
K\"ahler--Finsler metric and satisfies \eqref{eq:1.1}
with \(c=-2\).  The following conditions are equivalent:
\begin{enumerate}
\item[(i)] \(\kappa_D\) is a K\"ahler--Finsler metric;
\item[(ii)] \(\kappa_D\) is a Hermitian metric;
\item[(iii)] \(D\) is biholomorphic to \(\B^n\).
\end{enumerate}
\end{theorem}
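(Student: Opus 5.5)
We will prove Theorem~\ref{thm:convex} in three parts: the regularity statement, the radial curvature identity~\eqref{eq:1.1} with $c=-2$, and the cycle of implications (iii)$\Rightarrow$(ii)$\Rightarrow$(i)$\Rightarrow$(iii). The last implication is where Theorem~\ref{thm:main} enters.

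\textbf{Step 1: smoothness and weak K\"ahler property.} By Lempert's theory \cite{Lempert81,Lempert82}, through every $(z,v)\in T^{1,0}D\setminus\{0\}$ there is a unique complex geodesic $\varphi\colon\D\to D$ with $\varphi(0)=z$ and $\varphi'(0)=\lambda v$. It depends smoothly on $(z,v)$, and there is a holomorphic retraction $\rho\colon D\to\varphi(\D)$ with $\rho\circ\varphi=\varphi$. Smoothness of $\kappa_D$ off the zero section then follows from the smooth dependence of $\varphi$. Strong pseudoconvexity of $G=\kappa_D^2$ in the fibre follows from Lempert's identification of $\kappa_D$ with the Carath\'eodory metric and its plurisubharmonicity; I would cite \cite{AP,AP96} for the details.

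The retract gives two inequalities. First, $\kappa_D$ restricted to $\varphi(\D)$ coincides with the Poincar\'e metric. Second, for any holomorphic disc $\psi$ tangent to $\varphi$, $\kappa_D(\psi)\ge\kappa_{\D}(\rho\circ\psi)$. From these I would show that the horizontal radial field $\chi$ along $\varphi$ is tangent to the lift $\varphi'$, i.e.\ complex geodesics are Chern--Finsler geodesics. By Abate--Patrizio this is the weakly K\"ahler condition.

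\textbf{Step 2: the radial curvature identity.} Along $\varphi$ the vector field $\varphi'$ is horizontal and radial. The Chern--Finsler curvature $\langle\Omega(\chi,\bar\chi)\chi,\chi\rangle/G^2$ along $\varphi(\D)$ is the Gaussian curvature of the Poincar\'e metric, which is $-4$ in the normalization where $K_F=2c$; this gives $c=-2$. To upgrade the scalar identity to the vector identity $\Omega(\chi,\bar\chi)\chi=-2G\chi$, I would use the extremal property. The function $\log G$ restricted to discs $\psi$ tangent to $\varphi$ to second order is bounded below by its value on $\varphi$, with equality along $\varphi$. So the first variation in any horizontal direction $H$ of the quantity defining the holomorphic curvature vanishes. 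This forces the $H$-component of $\Omega(\chi,\bar\chi)\chi$ to be proportional to $\chi$, i.e.\ the symmetry \eqref{eq:1.2} holds. By the equivalence of Abate--Patrizio recalled after Theorem~\ref{thm:main}, this is equivalent to \eqref{eq:1.1}.

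\textbf{Step 3: the equivalences.} For (iii)$\Rightarrow$(ii), $\kappa_{\B^n}$ is the Bergman-type Hermitian metric and the Kobayashi metric is a biholomorphic invariant. (ii)$\Rightarrow$(i) holds because a Hermitian Kobayashi metric with \eqref{eq:1.1} is K\"ahler: the Kobayashi metric of a convex domain is weakly K\"ahler, and for Hermitian metrics the weak K\"ahler and K\"ahler conditions coincide. For (i)$\Rightarrow$(iii), Step~2 and Theorem~\ref{thm:main} with $c=-2$ show that $\kappa_D$ is a Hermitian K\"ahler metric of constant holomorphic sectional curvature $-4$. $D$ is convex, hence simply connected, and $\kappa_D$ is complete on a bounded convex domain. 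Theorem~\ref{thm:classification} then gives $D\cong\B^n$ biholomorphically.

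The main obstacle is Step~2, specifically obtaining the full vector identity \eqref{eq:1.1} rather than only $K_F=-4$. My first attempt would differentiate the retraction identity $\rho\circ\varphi_{(z,v)}=\varphi_{(z,v)}$ in horizontal directions $H$ of the parameter $(z,v)$, using the holomorphy of $\rho$. The aim is to show that the second fundamental form of $\varphi(\D)$ for the Chern--Finsler connection vanishes. That would give \eqref{eq:1.2} by a Gauss-equation argument.
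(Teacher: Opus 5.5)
The cycle of implications in your Step~3 is sound. The paper runs (i)$\Rightarrow$(ii)$\Rightarrow$(iii)$\Rightarrow$(i) and you run it the other way, but both rest on Theorem~\ref{thm:main} with $c=-2$, on weak K\"ahler being equivalent to K\"ahler for Hermitian metrics, and on the classification of complete simply connected space forms.

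\textbf{The gap is Step~2.} It is the only real analytic input, and you never establish \eqref{eq:1.1} for $\kappa_D$.
\begin{itemize}
\item The variational argument over discs tangent to $\varphi$ does not do the job. The Gaussian curvature at the centre of $\psi^*G$ sees only the $2$-jet of $\psi$. Extremality among such discs therefore yields at most that $\varphi(\D)$ has vanishing second fundamental form.
\item That argument gives no information on $\langle\Omega(\chi,\bar\chi)\chi,H\rangle$ for $H$ transverse to $\chi$. Your sentence ``this forces the $H$-component of $\Omega(\chi,\bar\chi)\chi$ to be proportional to $\chi$, i.e.\ the symmetry \eqref{eq:1.2} holds'' conflates \eqref{eq:1.1} with \eqref{eq:1.2}.
\item Even granting $K_F=-4$ and \eqref{eq:1.2}, the Abate--Patrizio equivalence (Proposition~\ref{prop:AP}) is stated for K\"ahler--Finsler metrics. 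For a general strongly convex $D$, $\kappa_D$ is only weakly K\"ahler; by the theorem itself it is K\"ahler only for the ball.
\item Consequently your route cannot prove the first assertion of the theorem. Inside (i)$\Rightarrow$(iii), where the K\"ahler hypothesis is available, it still rests on the unproved symmetry.
\end{itemize}
Your Step~1 derivation of the weak K\"ahler property is also only a pointer to the literature. Moreover, $\chi$ is not literally tangent to the lift $\zeta\mapsto(\varphi(\zeta),\varphi'(\zeta))$; it is tangent only modulo the vertical radial direction.

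\textbf{The missing idea} is to compare $G$ with the pulled-back Poincar\'e metric on a whole neighbourhood in $T^{1,0}D$, not just along discs. This is what Lemma~\ref{lem:retract} does.
\begin{itemize}
\item Use $z^1=q$ as a coordinate and straighten the disc to $\varphi(\zeta)=(\zeta,0,\dots,0)$.
\item Contractivity under $q$ gives $G\ge P:=|w^1|^2/(1-|z^1|^2)^2$ everywhere nearby, with equality on the lifted disc $((\zeta,0),e_1)$.
\item Hence $G-P$ has a minimum there, so all first base and fibre derivatives of $G$ and $P$ agree. With $a=(1-|\zeta|^2)^{-2}$ this gives $G_i=G_{\bar i}=\delta_{i1}a$ and $G_{;k}=\delta_{k1}\partial_\zeta a$.
\item Since $g_{i\bar1}=G_i$ at $v=e_1$, the matrix $g$ is block diagonal there. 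It follows that $N^i_1=g^{\bar si}\partial_1G_{\bar s}=\delta_{i1}2\bar\zeta/(1-|\zeta|^2)$ along the disc.
\item At $\zeta=0$ this gives $N=S=0$ and $\partial_1G_k=G_{;k}=0$, hence \eqref{eq:2.13}. It also gives $R^i=-\partial_{\bar1}N^i_1=-2\delta_{i1}=-2Gv^i$.
\end{itemize}
So the weak K\"ahler property and the full vector identity \eqref{eq:1.1} follow directly, with no use of $K_F$, of \eqref{eq:1.2}, or of any K\"ahler hypothesis. The transverse fibre information ($G_{\bar s}=0$ for $s\ne1$ on the disc) is exactly what restricting to discs throws away.
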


 For bounded strongly pseudoconvex domains,  Gaussier and Zimmer \cite{GZ} proved that
the Kobayashi metric is Hermitian K\"ahler if and only if the universal
cover is biholomorphic to the unit ball. A version assuming the Hermitian
K\"ahler condition near the boundary is noted in
\cite{DWW}. The next theorem replaces that
assumption by the smooth strongly pseudoconvex K\"ahler--Finsler condition.

\begin{theorem}\label{thm:boundary}
Let \(D\Subset\C^n\), \(n\ge2\), be strongly pseudoconvex with \(C^\infty\)
boundary, and let \(\kappa_D\) be the Kobayashi metric on $D$.
Then the following conditions are equivalent:
\begin{enumerate}
\item[(i)] \(\kappa_D\) is a smooth strongly pseudoconvex K\"ahler--Finsler
metric on \(D\);
\item[(ii)] There exists \(\epsilon>0\) such that the restriction
of \(\kappa_D\) to
\[
V_\epsilon
=\{z\in D:\operatorname{dist}(z,\partial D)<\epsilon\}
\]
is a smooth strongly pseudoconvex K\"ahler--Finsler metric;
\item[(iii)] \(\kappa_D\) is a Hermitian K\"ahler metric of holomorphic sectional
curvature \(-4\) on \(D\);
\item[(iv)] the universal cover \(\widehat D\) is biholomorphic to \(\B^n\).
\end{enumerate}
Under these conditions, \(D\simeq\B^n\) if \(D\) is simply connected.
\end{theorem}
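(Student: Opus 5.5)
We will prove the cycle of implications (i)$\Rightarrow$(ii)$\Rightarrow$(iii)$\Rightarrow$(iv)$\Rightarrow$(i), and then deduce the final sentence.

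The implication (i)$\Rightarrow$(ii) is trivial, by restriction to $V_\epsilon$. For (iv)$\Rightarrow$(i) and (iii), let $\pi:\B^n\to D$ be the universal covering. Coverings are local isometries for the Kobayashi metric, so $\pi^*\kappa_D=\kappa_{\B^n}$. Hence $\kappa_D$ is locally isometric to the Bergman-type metric $\kappa_{\B^n}$. That metric is Hermitian K\"ahler with holomorphic sectional curvature $-4$, and in particular it is a smooth strongly pseudoconvex K\"ahler--Finsler metric. All of these are local properties, so they descend to $D$. This gives (iii), and (i) as well.

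The implication (iii)$\Rightarrow$(iv) is the theorem of Gaussier and Zimmer \cite{GZ}. Alternatively, $\kappa_D$ is complete on $D$ because $D$ is bounded strongly pseudoconvex and hence complete hyperbolic. The lift $\pi^*\kappa_D$ to $\widehat D$ is then a complete K\"ahler metric of constant holomorphic sectional curvature $-4$ on a simply connected manifold, and Theorem~\ref{thm:classification} with $c=-2$ (or the classical space form theorem) identifies $\widehat D$ with $\B^n$.

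The main step is (ii)$\Rightarrow$(iii). We will use the boundary asymptotics of the Kobayashi metric on strongly pseudoconvex domains, via the scaling method of Pinchuk together with the localization results of Graham and Ma. Take a sequence $z_k\to p\in\partial D$. After rescaling, $(D,z_k)$ converges to $(\B^n,0)$, and $\kappa_D$ converges to $\kappa_{\B^n}$ in $C^\infty$ on compact subsets of $T^{1,0}\setminus\{0\}$, where $C^\infty$ convergence of the Kobayashi metric under scaling is where care is needed. Consequently the curvature quantity
\[
\Omega(\chi,\bar\chi)\chi-(-2)G\chi,
\]
as well as the deviation from Hermitian-ness measured for instance by the Cartan tensor, tends to $0$ as $z\to\partial D$.

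This alone does not give exact constancy. Instead we will try to obtain \eqref{eq:1.1} exactly on $V_\epsilon$ by a different route. Near $\partial D$ the Kobayashi metric is realized through Lempert-type extremal discs and holomorphic retracts, exactly as in the convex case of Theorem~\ref{thm:convex}: for points sufficiently close to the boundary, strongly pseudoconvex domains are locally strongly convex in suitable coordinates, and extremal discs through such points stay near the boundary point (by localization). Along each extremal disc $\varphi$ with left inverse, $\kappa_D$ pulls back to the Poincar\'e metric, and the argument used for Theorem~\ref{thm:convex} yields $\Omega(\chi,\bar\chi)\chi=-4G\chi$, i.e.\ \eqref{eq:1.1} with $c=-2$, on $V_\epsilon$ after shrinking $\epsilon$. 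Theorem~\ref{thm:main} applied on $V_\epsilon$ then shows that $\kappa_D$ is Hermitian K\"ahler of curvature $-4$ on $V_\epsilon$. The version of \cite{DWW} that assumes the Hermitian K\"ahler condition near the boundary then yields (iv), and hence (iii) by the previous paragraph.

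The hardest point is justifying the existence of these extremal discs with holomorphic retractions near the boundary in the merely strongly pseudoconvex case. I would first try to rely on Lempert's theory applied to local convex pieces combined with the localization estimates of the Kobayashi metric. If that fails, the fallback is a rigidity argument: use the asymptotic convergence to $\kappa_{\B^n}$ plus the fact that the rescaling limits of the K\"ahler--Finsler defect vanish identically.

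Finally, if $D$ is simply connected then $\widehat D=D$, so (iv) gives $D\simeq\B^n$.
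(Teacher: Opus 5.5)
The gap is in the main step (ii)$\Rightarrow$(iii)/(iv). Your handling of (i)$\Rightarrow$(ii) and (iv)$\Rightarrow$(iii)$\Rightarrow$(i) is fine. Your alternative for (iii)$\Rightarrow$(iv) is also valid and more elementary than citing \cite{GZ}: completeness of $\kappa_D$ follows from complete hyperbolicity, you lift to $\widehat D$, and then apply Theorem~\ref{thm:classification} with $c=-2$.

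To apply Theorem~\ref{thm:main} on $V_\epsilon$, you need $R^i=-2Gv^i$ for $\kappa_D$ in \emph{every} direction over $V_\epsilon$. For this you assert that every nonzero vector at a point near $\partial D$ is tangent to a holomorphic retract of $D$, that is, a disc $\varphi$ with a left inverse $q:D\to\D$ defined on all of $D$. For a general strongly pseudoconvex domain no such result is available, and your justification does not supply one.

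\begin{itemize}
\item Lempert theory on a locally convexified piece $U\cap D$ produces retracts of $U\cap D$. Lemma~\ref{lem:retract} then gives the exact identity only for $\kappa_{U\cap D}$, which agrees with $\kappa_D$ only asymptotically.
\item Extremal discs in near-normal directions do not localize. Already in $\B^n$, the extremal disc through $(1-t)e_1$ tangent to $e_1$ passes through the origin.
\item Bracci--Forn{\ae}ss--Wold \cite{BFW} provide global retracts only for vectors in the cone $|v_N|<\eta|v_T|$ around the complex tangential directions.
\item Your scaling fallback gives only asymptotic vanishing of the defect, which, as you note yourself, cannot yield an exact identity.
\end{itemize}

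A minor point: Lemma~\ref{lem:retract} gives $\Omega(\chi,\bar\chi)\chi=-2G\chi$. The value $-4$ is $K_F$, not $2c$ with $\Omega(\chi,\bar\chi)\chi=-4G\chi$.

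So the radial identity is obtained only on the open cone $C$ of \eqref{eq:5.1}, and two ideas are missing.

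First, you need a cone version of rigidity, Theorem~\ref{theo:cone}. The derivation of $AB=0$ and $B^*B=-cGA\overline A$, and hence of $A=0$, uses the identity only on an open set of $T^{1,0}V_\epsilon\setminus\{0\}$. Therefore the identity on an open $\C^*$-invariant cone with connected fibers already yields a Hermitian K\"ahler metric $h$ on $V_\epsilon$ with $\kappa_D^2=h$ on $C$. The bidegree-$(2,2)$ polynomial argument then gives $K_h\equiv-4$ in all directions.

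Second, $\kappa_D^2$ and $h$ are only known to agree on $C$. So you cannot conclude that $\kappa_D$ is Hermitian K\"ahler near $\partial D$, and \cite{DWW} does not apply directly. Nor is completeness of $h$ inherited from $\kappa_D$. The paper proves instead that $h$ is complete at infinity (Lemma~\ref{lem:infinity}). It bounds $h$ on normal directions using the triangle inequality and the Balogh--Bonk estimates $\kappa_D(z;w)\ge b|w_N|/\delta(z)$ and $\kappa_D(z;t)\le B|t|/\sqrt{\delta(z)}$. It then applies the Gaussier--Zimmer extension theorem for locally symmetric metrics complete at infinity, together with their Corollary~1.2, to get $\widehat D\simeq\B^n$. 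Condition (iii) then follows from (iv).
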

The equivalence of \textup{(iii)} and \textup{(iv)} in
Theorem~\ref{thm:boundary} follows from  Theorem~1.5   in 
\cite{GZ}.
The smoothness and strong pseudoconvexity in \textup{(i)} and
\textup{(ii)} are explicit assumptions.
In \textup{(ii)}, we use the restriction of \(\kappa_D\)
to \(V_\epsilon\).

The paper is organized as follows. In Section~2, we introduce the basic
definitions and notation of complex Finsler geometry. In Section~3, we
establish several identities for K\"ahler--Finsler metrics and
prove the classification theorem. In Section~4, we study the
K\"ahler--Finsler characterization of the unit ball. In Section~5, we
address the corresponding characterization of ball universal
covers for strongly pseudoconvex domains. In Section~6, we give two
examples of smooth weakly K\"ahler--Finsler metrics that are not
K\"ahler--Finsler.

\section{Preliminaries and conventions}
\subsection{The nonlinear connections and Chern-Finsler connections}

Let \(M\) be a complex manifold of dimension \(n\), and let
\(\pi:T^{1,0}M\to M\) be the holomorphic tangent bundle of \(M\).
Let \(z=(z^1,\dots,z^n)\) be local holomorphic coordinates on \(M\),
and let \((z,v)\) be the induced coordinates on \(T^{1,0}M\), where
\[
v=v^i\frac{\partial}{\partial z^i}.
\]

All smoothness statements refer to the complement of the zero section.
We set \(\Mt=T^{1,0}M\setminus\{0\}\).
A strongly pseudoconvex complex Finsler metric $F$ is a non-negative continuous
function on \(T^{1,0}M\) such that \(G=F^2\) is a smooth positive function on \(\Mt\),
\[
G(z,\lambda v)=|\lambda|^2G(z,v)\qquad((z,v)\in\Mt,\lambda\in\C^*),
\]
and the fundamental tensor
\[
g_{i\bar j}=\frac{\partial^2G}{\partial v^i\partial\bar v^j}
\]
is positive definite on \(\Mt\). Note that $G$ is smooth on the whole \(T^{1,0}M\) iff $F$ comes from a Hermitian metric. We use the notation
\[
\partial_j=\frac{\partial}{\partial z^j},\quad
\vp_j=\frac{\partial}{\partial v^j},\quad G_i=\vp_iG,\quad G_{;j}=\partial_jG.
\]
A bar on an index denotes differentiation in the conjugate variable, and
a semicolon separates the base from fiber derivatives. Thus \(\partial_{\bar j}\)
differentiates in \(\bar z^j\), whereas \(\vp_{\bar j}\) differentiates in
\(\bar v^j\). In particular, set
\[
A_{ij}=G_{ij},\qquad g_{i\bar j}=G_{i\bar j},\qquad P_{j\bar k}=G_{;j\bar k}.
\]
The matrix \(A\) is symmetric; it is not the Hermitian fundamental tensor \(g\).

Euler's identities give (Abate--Patrizio \cite{AP})
\begin{align}
v^iG_i&=G,\qquad \bar v^jG_{\bar j}=G,\label{eq:2.1}\\
g_{i\bar j}\bar v^j&=G_i,\qquad g_{i\bar j}v^i=G_{\bar j},
\label{eq:2.2}\\
A_{ij}v^j&=0,\qquad v^i\vp_rg_{i\bar j}=0.\label{eq:2.3}
\end{align}
We use the Einstein summation convention. The inverse tensor is denoted by
\(g^{\bar j i}\), so that \(g_{k\bar j}g^{\bar j i}=\delta^i_k\).

The nonlinear connection and horizontal derivatives are
\begin{equation}
N^i_j=g^{\bar a i}G_{\bar a;j},\quad
\delta_j=\partial_j-N^r_j\vp_r,\quad
\delta_{\bar j}=\partial_{\bar j}-\overline{N^r_j}\vp_{\bar r}.
\label{eq:2.4}
\end{equation}
The horizontal and vertical connection coefficients of Chern-Finsler
connection are
\begin{equation}
\Gamma^i_{jk}=g^{\bar a i}\delta_jg_{k\bar a},\qquad
C^i_{kr}=g^{\bar a i}\vp_rg_{k\bar a}.
\label{eq:2.5}
\end{equation}

\begin{lemma}[{\cite{AP}}]\label{lem:basic}
For every strongly pseudoconvex complex Finsler metric, we have
\begin{align}
\Gamma^i_{jk}&=\vp_kN^i_j,\qquad \Gamma^i_{jk}v^k=N^i_j,
\label{eq:2.6}\\
G_iN^i_j&=G_{;j},\qquad \delta_jG=\delta_{\bar j}G=0,
\label{eq:2.7}\\
\delta_jG_{\bar i}&=0,\qquad \delta_{\bar j}G_i=0,
\label{eq:2.8}\\
\delta_jG_k&=G_i\Gamma^i_{jk}.\label{eq:2.9} 
\end{align}
\end{lemma}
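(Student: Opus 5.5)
The plan is to verify the four groups of identities in Lemma~\ref{lem:basic} by direct computation from the definitions \eqref{eq:2.4}--\eqref{eq:2.5}. The tools are the homogeneity relations \eqref{eq:2.1}--\eqref{eq:2.3} and two facts about derivatives: base derivatives $\partial_j,\partial_{\bar j}$ commute with fibre derivatives $\vp_k,\vp_{\bar k}$, and $G$ is real. I will use the relations in the order $N$, then $G$, then $\Gamma$.

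First I establish \eqref{eq:2.7}. Using \eqref{eq:2.2}, I compute
\[
G_iN^i_j=g_{i\bar b}\bar v^b g^{\bar a i}G_{\bar a;j}=\bar v^aG_{\bar a;j}.
\]
Since $\partial_j$ commutes with $\vp_{\bar a}$ and the Euler relation $\bar v^aG_{\bar a}=G$ holds identically in $z$, this equals $\partial_j(\bar v^aG_{\bar a})=G_{;j}$. Hence
\[
\delta_jG=G_{;j}-N^r_jG_r=0.
\]
Because $G$ is real and $\delta_{\bar j}$ is the conjugate operator, we also get $\delta_{\bar j}G=\overline{\delta_jG}=0$.

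Next I prove \eqref{eq:2.8} by differentiating. Applying $\vp_{\bar i}$ to $\delta_jG=0$ gives
\[
0=\vp_{\bar i}\bigl(G_{;j}-N^r_jG_r\bigr)=G_{\bar i;j}-(\vp_{\bar i}N^r_j)G_r-N^r_jg_{r\bar i}.
\]
By the definition of $N$, the last term is $N^r_jg_{r\bar i}=G_{\bar i;j}$, so it cancels the first term. We are left with $(\vp_{\bar i}N^r_j)G_r=0$.

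The most convenient way to close this step is to compute directly:
\[
\delta_jG_{\bar i}=G_{\bar i;j}-N^r_j\vp_rG_{\bar i}=G_{\bar i;j}-N^r_jg_{r\bar i}=0.
\]
Conjugating then gives $\delta_{\bar j}G_i=0$. This proves \eqref{eq:2.8} without needing the intermediate identity.

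For \eqref{eq:2.6}, I write $N^i_j$ as $g^{\bar a i}G_{\bar a;j}$ and differentiate in $v^k$:
\[
\vp_kN^i_j=-g^{\bar a s}(\vp_kg_{s\bar b})g^{\bar b i}G_{\bar a;j}+g^{\bar a i}\vp_kG_{\bar a;j}.
\]
Since $\vp_kG_{\bar a;j}=\partial_jg_{k\bar a}$, the first term becomes $-N^s_j(\vp_kg_{s\bar b})g^{\bar bi}$. By the symmetry $\vp_kg_{s\bar b}=\vp_sg_{k\bar b}$ (both equal $\vp_k\vp_s\vp_{\bar b}G$), this is $-g^{\bar bi}N^s_j\vp_sg_{k\bar b}$. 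Adding the two terms gives
\[
\vp_kN^i_j=g^{\bar b i}\bigl(\partial_j-N^s_j\vp_s\bigr)g_{k\bar b}=\Gamma^i_{jk}.
\]
The contraction identity $\Gamma^i_{jk}v^k=N^i_j$ then follows from Euler's relation, because $N^i_j$ is homogeneous of degree $1$ in $v$ and holomorphically so. Indeed, $G_{\bar a;j}$ has bidegree $(1,0)$ in $(v,\bar v)$ and $g^{\bar a i}$ has bidegree $(0,0)$. I expect this bookkeeping of homogeneity degrees, namely checking that $N$ has bidegree $(1,0)$ and hence $v^k\vp_kN=N$, to be the only mildly delicate point; alternatively one can contract directly using $v^k\vp_kg_{s\bar b}=0$ and $v^k g_{k\bar b}=G_{\bar b}$ from \eqref{eq:2.2}--\eqref{eq:2.3}.

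Finally, for \eqref{eq:2.9}, I expand
\[
\delta_jG_k=\partial_jG_k-N^r_jA_{rk}.
\]
On the other side, using $\bar v^a g_{k\bar a}=G_k$, I have
\[
G_i\Gamma^i_{jk}=\bar v^a\delta_jg_{k\bar a}=\bar v^a\bigl(\partial_jg_{k\bar a}-N^r_j\vp_rg_{k\bar a}\bigr).
\]
Here $\bar v^a\partial_jg_{k\bar a}=\partial_jG_k$, and $\bar v^a\vp_rg_{k\bar a}=\vp_r(\bar v^ag_{k\bar a})=\vp_rG_k=A_{rk}$, because $\bar v$ is independent of $v$. The two sides therefore agree, which proves \eqref{eq:2.9} and completes the proof.
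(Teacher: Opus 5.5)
Your proposal is correct and follows essentially the same route as the paper: you differentiate the inverse matrix and use the symmetry of third fibre derivatives for \eqref{eq:2.6}, contract with $\bar v$ via \eqref{eq:2.2} for \eqref{eq:2.7}, use $g_{r\bar i}N^r_j=G_{\bar i;j}$ for \eqref{eq:2.8}, and differentiate $G_k=g_{k\bar a}\bar v^a$ horizontally for \eqref{eq:2.9}. The only differences are the order of the steps and your detour through $(\vp_{\bar i}N^r_j)G_r=0$, which is not needed here; it is the identity the paper uses later to prove \eqref{eq:3.2}.
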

\begin{proof}
Differentiating the inverse matrix in \eqref{eq:2.4}, and using the symmetry
of third fiber derivatives, we obtain
\[
\vp_kN^i_j=g^{\bar a i}(G_{k\bar a;j}-N^r_jG_{k\bar a r})=\Gamma^i_{jk}.
\]
Its contraction with \(v^k\) is \(N^i_j\) by homogeneity. By \eqref{eq:2.2},
\[
G_iN^i_j=\bar v^aG_{\bar a;j}=G_{;j}.
\]
This proves \eqref{eq:2.7}. By the definition of \(N\), we obtain
\(G_{\bar i;j}=g_{r\bar i}N^r_j\), which proves the first identity in
\eqref{eq:2.8}; the second is proved by the conjugation. Finally, by differentiating
\(G_k=g_{k\bar a}\bar v^a\) horizontally, \eqref{eq:2.9} is obtained.
\end{proof}

\subsection{Three torsion conditions and Radial curvature condition}

\begin{definition}
A strongly pseudoconvex complex Finsler metric $F$ is called 
 \emph{strongly K\"ahler--Finsler} if \(T^i_{jk}=0\),
\emph{K\"ahler--Finsler} if \(v^jT^i_{jk}=0\), and
\emph{weakly K\"ahler--Finsler} if
\begin{equation}
G_iT^i_{jk}v^j=0\quad\text{for every }k,\label{eq:2.11}
\end{equation}
where \(T^i_{jk}=\Gamma^i_{jk}-\Gamma^i_{kj}\).
\end{definition}
The expression in \eqref{eq:2.11} is coordinate invariant. For a Hermitian
metric, all three conditions reduce to the ordinary K\"ahler condition.

Define the spray, without a factor of \(1/2\), by
\begin{equation}
S^i=N^i_jv^j,\qquad B^i_{\bar\ell}=\vp_{\bar\ell}S^i.
\label{eq:2.10}
\end{equation}
\begin{lemma}[ {\cite{CS}}]\label{lem:strong}
The K\"ahler--Finsler and strongly K\"ahler--Finsler conditions are equivalent.
Under either condition,
\begin{equation}
\vp_kS^i=2N^i_k.\label{eq:2.12}
\end{equation}
\end{lemma}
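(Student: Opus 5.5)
Strongly K\"ahler--Finsler trivially implies K\"ahler--Finsler, so the content of the equivalence is the converse: from $v^jT^i_{jk}=0$ we must deduce $T^i_{jk}=0$. The plan is to differentiate the hypothesis in the fibre direction. The only tools needed are \eqref{eq:2.6}, which gives $\Gamma^i_{jk}=\vp_kN^i_j$, and the homogeneity of the coefficients: $N^i_j$ has bidegree $(1,0)$ in $(v,\bar v)$, so $\Gamma^i_{jk}$ is of degree $0$ in $v$.

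The key steps, in order, are as follows.

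\emph{Step 1.} Rewrite the hypothesis. By \eqref{eq:2.6}, $v^j\Gamma^i_{jk}=v^j\vp_kN^i_j$. Also $v^j\Gamma^i_{kj}=N^i_k$, again by \eqref{eq:2.6}. Hence the K\"ahler--Finsler condition reads
\[
v^j\vp_kN^i_j=N^i_k .
\]
Moreover
\[
\vp_kS^i=\vp_k(N^i_jv^j)=v^j\vp_kN^i_j+N^i_k .
\]
So the K\"ahler--Finsler condition is equivalent to $\vp_kS^i=2N^i_k$, which is \eqref{eq:2.12}.

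\emph{Step 2.} Differentiate $\vp_kS^i=2N^i_k$ with respect to $v^l$:
\[
\vp_l\vp_kS^i=2\vp_lN^i_k=2\Gamma^i_{kl}.
\]
The left side is symmetric in $k,l$ because fibre partial derivatives commute. Therefore $\Gamma^i_{kl}=\Gamma^i_{lk}$, that is, $T^i_{kl}=0$. This is the strongly K\"ahler--Finsler condition.

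\emph{Step 3.} Conversely, if $T=0$, then $v^jT^i_{jk}=0$ holds trivially. The identity \eqref{eq:2.12} then follows from Step 1.

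The argument is short. The one point needing care is smoothness: the $N^i_j$ are smooth on $\Mt$, so all the derivatives above exist there and Step 2 is legitimate on $\Mt$.
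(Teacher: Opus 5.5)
Your proposal is correct and follows essentially the same route as the paper: both use \eqref{eq:2.6} to rewrite $v^jT^i_{jk}$ as $\vp_kS^i-2N^i_k$, then differentiate once more in the fibre to get $\Gamma^i_{jk}=\vp_kN^i_j=\frac12\vp_k\vp_jS^i$, which is symmetric in $j,k$. The only cosmetic difference is that you cite the contraction $\Gamma^i_{kj}v^j=N^i_k$ from \eqref{eq:2.6}, whereas the paper invokes the homogeneity of $N^i_k$ directly; these amount to the same thing.
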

\begin{proof}
By \eqref{eq:2.6} and homogeneity, we have
\[
v^jT^i_{jk}=v^j\vp_kN^i_j-v^j\vp_jN^i_k=\vp_kS^i-2N^i_k.
\]
If this expression vanishes, then
\(\Gamma^i_{jk}=\frac12\vp_k\vp_jS^i\), which is symmetric in \(j,k\).
The converse is immediate.
\end{proof}

\begin{lemma}\label{lem:weak}
The weak K\"ahler condition is equivalent to
\begin{equation}
v^jG_{k;j}-G_{;k}-S^rA_{kr}=0\qquad(k=1,\dots,n).
\label{eq:2.13}
\end{equation}
\end{lemma}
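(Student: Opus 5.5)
We need to show that $G_iT^i_{jk}v^j=0$ for all $k$ holds if and only if \eqref{eq:2.13} holds. The plan is to compute the contraction $G_iT^i_{jk}v^j$ explicitly in terms of derivatives of $G$, using Lemma~\ref{lem:basic}. We will show it equals exactly the left-hand side of \eqref{eq:2.13}, possibly up to sign.

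First, split $T^i_{jk}=\Gamma^i_{jk}-\Gamma^i_{kj}$ and treat the two terms separately.

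For the second term, \eqref{eq:2.6} gives $\Gamma^i_{kj}v^j=N^i_k$. Then \eqref{eq:2.7} yields
\[
G_i\Gamma^i_{kj}v^j=G_iN^i_k=G_{;k}.
\]

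For the first term, \eqref{eq:2.9} gives $G_i\Gamma^i_{jk}=\delta_jG_k$. Hence
\[
G_i\Gamma^i_{jk}v^j=v^j\delta_jG_k=v^j\bigl(G_{k;j}-N^r_jG_{kr}\bigr)=v^jG_{k;j}-S^rA_{kr},
\]
using the definitions \eqref{eq:2.4} of $\delta_j$ and \eqref{eq:2.10} of $S^r$, together with $G_{kr}=A_{kr}$.

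Subtracting the two terms gives
\[
G_iT^i_{jk}v^j=v^jG_{k;j}-G_{;k}-S^rA_{kr}.
\]
So the weak Kähler--Finsler condition \eqref{eq:2.11} holds for every $k$ precisely when \eqref{eq:2.13} holds.

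The only point needing care is \eqref{eq:2.9} itself. It was obtained by differentiating $G_k=g_{k\bar a}\bar v^a$ horizontally, which relies on $\delta_j\bar v^a=0$; this holds because $\delta_j$ contains only $\partial_j$ and $\vp_r$ and no $\vp_{\bar r}$. We also need the index placement in \eqref{eq:2.9} to match $\Gamma^i_{jk}$ with $j$ contracted against $v$, and it does. No other obstacle is expected; the argument is a direct computation.
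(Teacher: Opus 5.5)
Your proof is correct and follows essentially the paper's argument. You split $T^i_{jk}$ into its two terms, use \eqref{eq:2.9} to rewrite the first as $v^j\delta_jG_k=v^jG_{k;j}-S^rA_{kr}$, and identify the second with $G_{;k}$. The only cosmetic difference is in the second term: you use $\Gamma^i_{kj}v^j=N^i_k$ and $G_iN^i_k=G_{;k}$, whereas the paper writes it as $v^j\delta_kG_j$ via \eqref{eq:2.9} and then applies $v^jA_{jr}=0$ and Euler's identity.
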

\begin{proof}
By \eqref{eq:2.5} and \eqref{eq:2.9}, we have
\begin{equation}
G_iT^i_{jk}v^j=v^j[\delta_j(G_k)-\delta_k(G_j)].
\label{eq:2.14}
\end{equation}
The first term is \(v^j\delta_j(G_k)=v^jG_{k;j}-S^rA_{kr}\).
Because \(v^jA_{jr}=0\), the second term \(v^j\delta_k(G_j)=G_{;k}\).
This proves the claim.
\end{proof}

Define
\begin{equation}
R^i_{j\bar k}=-\delta_{\bar k}N^i_j,\quad
R^i=R^i_{j\bar k}v^j\bar v^k,\quad r_{j\bar k}=G_iR^i_{j\bar k}.
\label{eq:2.15}
\end{equation}
These quantities are tensorial on the slit tangent bundle \(\Mt\). One can also see
this from the full horizontal curvature of the Chern-Finsler connection:
\begin{equation}
R^i_{k;j\bar\ell}=-\delta_{\bar\ell}\Gamma^i_{jk}
-C^i_{ks}\delta_{\bar\ell}N^s_j,\qquad
R^i_{k;j\bar\ell}v^k=R^i_{j\bar\ell}.
\label{eq:2.16}
\end{equation}
The contraction follows from \(C^i_{ks}v^k=0\) and \eqref{eq:2.6}.
Let \(\mathcal H\) be the horizontal bundle spanned locally by
\(\delta_1,\dots,\delta_n\), with the Hermitian metric induced by \(g\).
For \(\chi=v^j\delta_j\), we have
\[
\Omega(\chi,\bar\chi)\chi=R^i\delta_i.
\]
The holomorphic sectional curvature is defined by 
\begin{equation}
K_F(z,v)=\frac{2\langle\Omega(\chi,\bar\chi)\chi,\chi\rangle_g}{G^2}
=\frac2{G^2}G_iR^i.\label{eq:2.17}
\end{equation}
Thus the identity
\begin{equation}
R^i=cGv^i\quad\text{on }\Mt\label{eq:2.18}
\end{equation}
implies \(K_F=2c\) by \eqref{eq:2.17}.  Therefore, \eqref{eq:2.18} is the invariant identity
\(\Omega(\chi,\bar\chi)\chi=cG\chi\).

\begin{definition}
A strongly pseudoconvex complex Finsler metric \(F\) is said to have  
\emph{constant radial curvature} \(c\), for some \(c\in\R\), if
\eqref{eq:2.18} holds.
\end{definition}

\begin{lemma}\label{lem:r}
For a strongly pseudoconvex complex Finsler metric,
we have
\begin{equation}
r_{j\bar k}=-G_{;j\bar k}+g_{a\bar b}N^a_j\overline{N^b_k},
\qquad r_{j\bar k}=\overline{r_{k\bar j}},
\label{eq:2.19}
\end{equation}
and
\begin{equation}
R^i=-\bar\chi(S^i).\label{eq:2.20}
\end{equation}
\end{lemma}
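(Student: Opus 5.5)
The plan is a direct computation from the definitions in \eqref{eq:2.4} and \eqref{eq:2.15}, using Lemma~\ref{lem:basic} repeatedly. No Kähler-type condition is needed, since everything follows from the definition of $N$.

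For \eqref{eq:2.19}, start from $r_{j\bar k}=G_iR^i_{j\bar k}=-G_i\delta_{\bar k}N^i_j$. By the Leibniz rule,
\[
-G_i\delta_{\bar k}N^i_j=-\delta_{\bar k}(G_iN^i_j)+N^i_j\,\delta_{\bar k}G_i .
\]
The first term is handled by \eqref{eq:2.7}, which gives $G_iN^i_j=G_{;j}$. Hence
\[
\delta_{\bar k}G_{;j}=G_{;j\bar k}-\overline{N^r_k}\,\vp_{\bar r}G_{;j}.
\]
The second term is handled by \eqref{eq:2.8}, which gives $\delta_{\bar k}G_i=0$, so it vanishes. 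That alone is not the claimed formula, so the leftover piece must be rewritten. Differentiate the identity $G_{\bar a;j}=g_{r\bar a}N^r_j$ from the proof of Lemma~\ref{lem:basic}; note that $\vp_{\bar r}G_{;j}=G_{\bar r;j}$ because base and fiber derivatives commute. This gives $\vp_{\bar r}G_{;j}=g_{a\bar r}N^a_j$, so
\[
\overline{N^r_k}\,\vp_{\bar r}G_{;j}=g_{a\bar b}N^a_j\overline{N^b_k}.
\]
Combining the pieces yields $r_{j\bar k}=-G_{;j\bar k}+g_{a\bar b}N^a_j\overline{N^b_k}$.

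Hermitian symmetry then follows because $G$ is real. Indeed, $\overline{G_{;k\bar j}}=G_{;j\bar k}$, and the quadratic term is Hermitian since $g$ is. So $\overline{r_{k\bar j}}=r_{j\bar k}$.

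For \eqref{eq:2.20}, write
\[
\bar\chi(S^i)=\bar v^k\delta_{\bar k}(N^i_jv^j).
\]
Since $\delta_{\bar k}$ involves only $\partial_{\bar z}$ and $\vp_{\bar r}$, it annihilates $v^j$. Therefore
\[
\bar\chi(S^i)=v^j\bar v^k\delta_{\bar k}N^i_j=-R^i_{j\bar k}v^j\bar v^k=-R^i .
\]

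The only delicate step is the sign and conjugation bookkeeping in rewriting $\overline{N^r_k}\,\vp_{\bar r}G_{;j}$. In particular one must confirm $\vp_{\bar r}G_{;j}=g_{a\bar r}N^a_j$, matching the index placement $N^i_j=g^{\bar a i}G_{\bar a;j}$.
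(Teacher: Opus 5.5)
Your proposal is correct and follows essentially the same route as the paper. You use the Leibniz rule together with $\delta_{\bar k}G_i=0$ and $G_iN^i_j=G_{;j}$ to reduce $r_{j\bar k}$ to $-\delta_{\bar k}G_{;j}$. You then rewrite $G_{\bar r;j}=g_{a\bar r}N^a_j$ from the definition of $N$, take conjugates for the Hermitian symmetry, and use $\delta_{\bar k}v^j=0$ for \eqref{eq:2.20}; the only difference is that you spell out the Leibniz step, which the paper compresses into one line.
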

\begin{proof}
Since   \(G_iN^i_j=G_{;j}\) ,  \(\delta_{\bar k}G_i=0\)  and  \(R^i_{j\bar k}=-\delta_{\bar k}N^i_j\), we get 
\begin{equation}
-r_{j\bar k}=G_i\delta_{\bar k}N^i_j=\delta_{\bar k}G_{;j}
=G_{;j\bar k}-\overline{N^a_k}G_{\bar a;j}.
\label{eq:2.21}
\end{equation}
It follows from \eqref{eq:2.4} that
\(G_{\bar a;j}=g_{i\bar a}N^i_j\). Substituting this into
\eqref{eq:2.21}, we have
\[
r_{j\bar k}=-G_{;j\bar k}+g_{a\bar b}N^a_j\overline{N^b_k}.
\]
Since  \(\overline{G_{;k\bar j}}=G_{;j\bar k}\) and 
 \(\overline{g_{a\bar b}}=\vp_b\vp_{\bar a}G=g_{b\bar a}\), we get 
\[
\overline{r_{k\bar j}}=-\overline{G_{;k\bar j}}
+\overline{g_{a\bar b}}N^b_j\overline{N^a_k}
=-G_{;j\bar k}+g_{b\bar a}N^b_j\overline{N^a_k}=r_{j\bar k}.
\]
This proves \eqref{eq:2.19}.

Since  
\begin{equation}
\delta_{\bar k}v^j=\partial_{\bar k}v^j-\overline{N^r_k}\vp_{\bar r}v^j=0,
\label{eq:2.22}
\end{equation}
we obtain 
\[
\bar\chi(S^i)=\bar v^k\delta_{\bar k}(N^i_jv^j)
=\bar v^kv^j\delta_{\bar k}N^i_j=-R^i_{j\bar k}v^j\bar v^k=-R^i.
\]
 This proves \eqref{eq:2.20}.
\end{proof}

\begin{proposition}[ {\cite{AP}}]\label{prop:AP}
For a strongly pseudoconvex K\"ahler--Finsler metric and a real constant \(c\),
the conditions \(K_F=2c\) and \eqref{eq:1.2} together are equivalent to
\eqref{eq:2.18}.
\end{proposition}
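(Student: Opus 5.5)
We prove the two directions separately, working in the coordinates of Section~2. By \eqref{eq:2.17}, the curvature quantity is $\langle\Omega(H,\bar\chi)\chi,\chi\rangle_g = G_iR^i_{j\bar k}H^j\bar v^k$ for $H=H^j\delta_j$. The symmetry \eqref{eq:1.2} therefore reads
\[
G_iR^i_{j\bar k}\bar v^k=\langle\Omega(\chi,\bar\chi)\delta_j,\chi\rangle_g .
\]
The right-hand side uses the full horizontal curvature \eqref{eq:2.16}, namely $G_iR^i_{k;l\bar m}v^l\bar v^m$ with $k$ now the free index.

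\emph{Easy direction.} Assume \eqref{eq:2.18}. Then $K_F=2c$ follows immediately from \eqref{eq:2.17}. For \eqref{eq:1.2}, we compute both sides as vertical derivatives of $R^i$.

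On the left, $R^i=R^i_{j\bar k}v^j\bar v^k$. Since $\delta_{\bar k}$ commutes with $\vp_j$ up to terms involving $\vp_{\bar r}$, which kill $v^j$, we get $\vp_j$ of $-\bar\chi(S^i)$. By Lemma~\ref{lem:strong}, $\vp_jS^i=2N^i_j$, so
\[
\vp_jR^i=2R^i_{j\bar k}\bar v^k+(\text{a term }\propto R^i_{k;j\bar\ell}\ \text{contracted}).
\]
More cleanly, the K\"ahler condition makes $\Gamma$ symmetric, so $R^i_{k;j\bar\ell}=R^i_{j;k\bar\ell}$ after contraction with $v$. This gives $\vp_j R^i = R^i_{j\bar k}\bar v^k + R^i_{k;j\bar\ell}v^k\bar v^\ell$-type identities. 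Contracting with $G_i$ and using $\vp_j(cGv^i)$ together with \eqref{eq:2.1}–\eqref{eq:2.3}, both sides of \eqref{eq:1.2} come out equal to $cGG_j$.

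\emph{Converse.} Assume $K_F=2c$ and \eqref{eq:1.2}. Set
\[
\Phi_j=G_iR^i_{j\bar k}\bar v^k=\langle\Omega(\delta_j,\bar\chi)\chi,\chi\rangle_g .
\]
Homogeneity gives $v^j\Phi_j=G_iR^i=cG^2$.

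The plan is to differentiate the scalar identity $G_iR^i=cG^2$ vertically in $v^j$. This produces
\[
G_{ij}R^i+G_i\vp_jR^i=2cGG_j .
\]
Two inputs then reduce this. First, the K\"ahler condition, via Lemma~\ref{lem:strong} and the Bianchi-type symmetry $R^i_{k;j\bar\ell}v^k=R^i_{j\bar\ell}$ in \eqref{eq:2.16}, expresses $G_i\vp_jR^i$ as the sum of the two sides of \eqref{eq:1.2}. Second, \eqref{eq:1.2} makes these two sides equal. Together they give
\[
A_{ij}R^i+2\Phi_j=2cGG_j .
\]

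In parallel, we show that $\langle\Omega(\chi,\bar\chi)\chi,H\rangle_g = g_{i\bar l}R^i\bar H^l$. Using the Hermitian symmetry \eqref{eq:2.19} of the curvature of the Chern–Finsler connection, $\overline{\langle\Omega(\chi,\bar\chi)H,\chi\rangle}=\langle\Omega(\bar\chi,\chi)\chi,H\rangle$-type relations, and \eqref{eq:1.2}, we aim to identify
\[
g_{i\bar l}R^i=\overline{\Phi_l}\ \text{(up to conjugation conventions)}=cGG_{\bar l}.
\]
Then $R^i=cGg^{\bar l i}G_{\bar l}=cGv^i$ by \eqref{eq:2.2}, which is \eqref{eq:2.18}.

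The main obstacle is this last identification. We must show that the $\bar v$-derivative of the scalar identity, together with the vertical holomorphic derivative computed above, forces $\Phi_j=cGG_j$ rather than merely $v^j\Phi_j=cG^2$. The term $A_{ij}R^i$ must be shown to vanish or be absorbed. I would first try to show that $A_{ij}R^i=0$ by writing $R^i=cGv^i+W^i$ with $G_iW^i=0$, and then use $A_{ij}v^i=0$. The hope is to derive a linear system for $W$ whose positivity, coming from the definiteness of $g$, forces $W=0$. This is essentially the Abate–Patrizio computation, and it is where I expect the bookkeeping of conjugate indices to be delicate.
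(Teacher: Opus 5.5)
The paper gives no proof of this proposition; it quotes it from Abate--Patrizio. Your argument therefore has to stand on its own. Your relation $A_{ij}R^i+2\Phi_j=2cGG_j$ is correct, but there is a genuine gap exactly at the step you call the main obstacle: nothing in the proposal proves $A_{ij}R^i=0$, and the positivity route you suggest cannot. Write $R^i=cGv^i+W^i$. Then the relations you have assembled ($A_{ij}R^i+2\Phi_j=2cGG_j$, $g_{i\bar l}R^i=\overline{\Phi_l}$ and $G_iR^i=cG^2$) reduce to $A_{ij}W^i+2g_{j\bar i}\overline{W^i}=0$ and $G_iW^i=0$. In a frame with $g=I$ this reads $AW=-2\overline W$, which has nonzero solutions whenever the symmetric matrix $A$ has a singular value equal to $2$. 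Strong pseudoconvexity puts no bound on $A$ relative to $g$; only strong convexity would give $|A(\xi,\xi)|<g(\xi,\bar\xi)$. So definiteness of $g$ cannot force $W=0$ from these relations.

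The missing idea is that \eqref{eq:1.2} \emph{is} the statement $A_{js}R^s=0$. You lost this through a false identity: the K\"ahler symmetry does not give $R^i_{j;k\bar\ell}v^k=R^i_{k;j\bar\ell}v^k$. By \eqref{eq:2.16}, with $\Gamma^i_{kj}=\Gamma^i_{jk}$ and $\delta_{\bar\ell}v^k=0$,
\[
R^i_{j;k\bar\ell}v^k=R^i_{j\bar\ell}+C^i_{js}R^s_{k\bar\ell}v^k .
\]
Contract with $G_i\bar v^\ell$ and use $G_iC^i_{js}=\bar v^b\vp_sg_{j\bar b}=A_{js}$. This gives, for every K\"ahler--Finsler metric,
\[
\Psi_j:=\langle\Omega(\chi,\bar\chi)\delta_j,\chi\rangle_g=\Phi_j+A_{js}R^s .
\]
Next, commute $\vp_j$ past $\bar\chi=\bar v^k\partial_{\bar k}-\overline{S^a}\vp_{\bar a}$ and use \eqref{eq:2.12}. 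This gives $\vp_jR^i=2R^i_{j\bar\ell}\bar v^\ell+\overline{B^r_{\bar j}}B^i_{\bar r}$, hence $G_i\vp_jR^i=2\Phi_j$ by \eqref{eq:3.2}, without using \eqref{eq:1.2}. Your assertion $G_i\vp_jR^i=\Phi_j+\Psi_j$ is therefore equivalent to \eqref{eq:1.2}, not a consequence of the K\"ahler condition.

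With these two identities both directions follow at once.
\begin{itemize}
\item Suppose \eqref{eq:2.18} holds. Then $A_{js}R^s=cGA_{js}v^s=0$ by \eqref{eq:2.3}, which gives \eqref{eq:1.2}, and $K_F=2c$ by \eqref{eq:2.17}. Your easy direction reached the right conclusion only because the dropped term vanishes here.
\item Conversely, \eqref{eq:1.2} gives $A_{js}R^s=0$. The $v^j$-derivative of $G_iR^i=cG^2$ then gives $\Phi_j=cGG_j=\Psi_j$. Now apply the Hermitian symmetry $\overline{\langle\Omega(X,\bar Y)Z,Z'\rangle_g}=\langle\Omega(Y,\bar X)Z',Z\rangle_g$ of the Chern--Finsler curvature with $X=Y=\chi$. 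It gives $g_{i\bar j}R^i=\overline{\Psi_j}=cGG_{\bar j}=cGg_{i\bar j}v^i$, that is, $R^i=cGv^i$.
\end{itemize}
The symmetry used here exchanges the section slots. \eqref{eq:2.19} is a different instance, exchanging the form slots, so it is not the one you need.
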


\begin{lemma}[\cite{Ai95,XiaoQiuZhong}]\label{lem:normal}
If \(F\) is a K\"ahler--Finsler metric , then for each \((p,v_0)\in\Mt\) there are
holomorphic coordinates such that
\begin{equation}
g_{i\bar j}(p,v_0)=\delta_{ij},\qquad\Gamma^i_{jk}(p,v_0)=0,\qquad N^i_j(p,v_0)=0.
\label{eq:2.23}
\end{equation}
\end{lemma}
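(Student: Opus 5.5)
The plan is to build the coordinates in three stages: first a linear change, then a quadratic holomorphic change of coordinates, and finally a check that the resulting $N^i_j(p,v_0)$ vanishes.

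\textbf{Stage 1: normalising $g$.} Since $g_{i\bar j}(p,v_0)$ is a positive definite Hermitian matrix, a linear change $z\mapsto Lz$ achieves $g_{i\bar j}(p,v_0)=\delta_{ij}$. Under a linear change all the quantities transform tensorially.

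\textbf{Stage 2: killing $\Gamma$.} Next we apply a quadratic change
\[
z^i=w^i+\tfrac12 a^i_{jk}w^jw^k,\qquad a^i_{jk}=a^i_{kj},
\]
centred at $p$, so $z(p)=w(p)=0$. Its Jacobian at $p$ is the identity, so the normalisation $g_{i\bar j}(p,v_0)=\delta_{ij}$ survives and $v_0$ keeps its components. For this change the fiber coordinates transform by $v^i=(\partial z^i/\partial w^j)u^j$. We compute how $N^i_j$ transforms at the point. From the standard transformation law of the Chern--Finsler nonlinear connection, at $p$ one finds
\[
\tilde N^i_j=N^i_j+a^i_{jk}u^k ,
\]
since $G_{\bar a;j}$ picks up the term $g_{r\bar a}\,a^r_{jk}u^k$ from the $w$-dependence of the Jacobian. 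Differentiating in $u^k$, which is legitimate because the transformation holds on a whole neighbourhood of the fiber, and using \eqref{eq:2.6}, we get at $(p,v_0)$
\[
\tilde\Gamma^i_{jk}=\Gamma^i_{jk}+a^i_{jk}.
\]
By Lemma~\ref{lem:strong}, the K\"ahler--Finsler hypothesis makes $\Gamma^i_{jk}(p,v_0)$ symmetric in $j,k$. We can therefore choose the constants $a^i_{jk}:=-\Gamma^i_{jk}(p,v_0)$, which are symmetric as the ansatz requires. This gives $\tilde\Gamma(p,v_0)=0$.

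\textbf{Stage 3: $N$.} The vanishing of $N^i_j$ is then automatic from \eqref{eq:2.6}: $N^i_j(p,v_0)=\Gamma^i_{jk}(p,v_0)v_0^k=0$.

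The main obstacle is the transformation computation in Stage 2. One must verify carefully that the only inhomogeneous term in the transformation of $N^i_j$ at $p$ comes from the second derivatives $\partial^2z^i/\partial w^j\partial w^k$, with the expected sign and index placement. I would do this via $G_{\bar a;j}$ and \eqref{eq:2.4}, using that $g_{i\bar j}$ transforms tensorially, and that $\partial_w$ at fixed $u$ differs from $\partial_z$ at fixed $v$ by the term $(\partial^2 z/\partial w\partial w)\,u\,\vp$. The symmetry of $a^i_{jk}$ is essential here, and this is exactly where the K\"ahler--Finsler condition, rather than merely the weak condition, is needed.
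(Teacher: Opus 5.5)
Your proposal is correct and is essentially the paper's proof: the same quadratic change $z^i=w^i+\tfrac12 a^i_{jk}w^jw^k$ with $a^i_{jk}=-\Gamma^i_{jk}(p,v_0)$, whose symmetry comes from Lemma~\ref{lem:strong}, followed by $N^i_j(p,v_0)=\Gamma^i_{jk}(p,v_0)v_0^k=0$. The only difference is that you derive the inhomogeneous term of the $\Gamma$ transformation law by differentiating the transformation law of $N^i_j$ in the fiber variable at $p$, whereas the paper quotes that law directly; you also state explicitly where the K\"ahler--Finsler symmetry is used, which the paper leaves implicit.
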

\begin{proof}
We choose coordinates \(z\) centered at \(p\) with \(g(p,v_0)=I\), and 
make a  holomorphic coordinate change
\[
z^i=w^i+\frac12q^i_{jk}w^jw^k,\qquad q^i_{jk}=q^i_{kj}=-\Gamma^i_{jk}(p,v_0).
\]
Write \(L^i_j=\partial z^i/\partial w^j\), so the fiber coordinates satisfy
\(v=Lu\). The connection transformation law is
\[
\widetilde\Gamma^i_{jk}=(L^{-1})^i_aL^b_jL^d_k\Gamma^a_{bd}
+(L^{-1})^i_a\frac{\partial^2z^a}{\partial w^j\partial w^k}.
\]
Since   \( (L^i_j)=I\) at the chosen point $(p,v_0)$,  we have
$$\widetilde\Gamma^i_{jk}(p,v_0)=\Gamma^i_{jk}(p,v_0)+q^i_{jk}=0 \mbox{ and  }   \widetilde N^i_j(p,v_0)=v_0^k \widetilde\Gamma^i_{jk}(p,v_0)=0.$$
\end{proof}

\section{Local rigidity and the classification theorem}
Let \(V\subset M\) be an open subset of a complex manifold $M$ and let \(C\subset T^{1,0}V\setminus\{0\}\) be open and invariant
under fiber multiplication by \(\C^*\). Assume that every fiber \(C_z\) is
nonempty and connected. 

\begin{theorem}\label{theo:cone}
Suppose that \(F\) is a strongly pseudoconvex K\"ahler--Finsler metric on \(C\), and
\(R^i=cGv^i\)   for a real constant \(c\ne0\). Then there exists a unique smooth
Hermitian K\"ahler metric \(h\) on \(V\) satisfying
\begin{equation}
G(z,v)=h_z(v,v)\qquad((z,v)\in C).\label{eq:3.1}
\end{equation}
Its holomorphic sectional curvature is \(2c\) in every nonzero direction.
\end{theorem}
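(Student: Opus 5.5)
The plan is to show that $G$ restricted to each fiber $C_z$ is the restriction of a Hermitian quadratic form, by proving that the fiber Hessian $g_{i\bar j}$ is independent of $v$. Equivalently, we show that the Cartan tensor $C_{k\bar a r}=\vp_r g_{k\bar a}$ vanishes on $C$. We work pointwise at $(p,v_0)\in C$ in the normal coordinates of Lemma~\ref{lem:normal}, where $g=I$, $\Gamma=0$ and $N=0$. By Lemma~\ref{lem:strong} we also have $\Gamma^i_{jk}=\tfrac12\vp_j\vp_kS^i$ and $N^i_k=\tfrac12\vp_kS^i$. The starting identity is $R^i=-\bar\chi(S^i)=cGv^i$ from \eqref{eq:2.20}. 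Apply $\vp_j\vp_k$ to it, using the commutation $[\vp_j,\delta_{\bar\ell}]=-\vp_j\overline{N^r_\ell}\,\vp_{\bar r}=0$ (since $\overline{N}$ is antiholomorphic-type in $\bar v$, so this commutator vanishes). This gives
\[
-\bar v^\ell\delta_{\bar\ell}\Gamma^i_{jk}\cdot 2=c\,\vp_j\vp_k(Gv^i),
\]
which expresses $\bar\chi(\Gamma^i_{jk})$ in terms of $G_{jk}=A_{jk}$, $G_j$, $G_k$ and $v^i$. Next apply $\vp_{\bar m}$ once more. The left side becomes a combination of $R^i_{k;j\bar m}$ and $\bar\chi$-derivatives of $\vp_{\bar m}\Gamma$; at the normal point the nonlinear-connection terms drop out.

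The key step is to convert this into an equation for the Cartan tensor. From \eqref{eq:2.16} and $\Gamma^i_{jk}=\vp_kN^i_j$, the vertical derivative $\vp_{\bar m}$ of the curvature relation involves $\delta_{\bar\ell}C$-type terms. The K\"ahler symmetry $\Gamma^i_{jk}=\Gamma^i_{kj}$ yields the Bianchi-type identity $\delta_j g_{k\bar a}=\delta_k g_{j\bar a}$. Taking $\vp_r$ of it gives $\delta_j C_{k\bar a r}$ symmetric in $j,k$ modulo $\Gamma\cdot C$ terms, and these vanish at the normal point. Combining with the conjugate relation, I expect to derive, at $(p,v_0)$, an identity of the form
\[
c\,\bigl(\text{nonzero multiple of } G\bigr)\,C_{k\bar a r}=\text{(terms that are $\chi$- or $\bar\chi$-derivatives of $C$ contracted with } v\text{)} .
\]
The right-hand side vanishes because $v^kC_{k\bar ar}=0$ by \eqref{eq:2.3}. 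Since $c\neq0$ and $G>0$, this forces $C=0$. Concretely, I would compute $\vp_{\bar m}\vp_j\vp_k$ of $R^i=cGv^i$ and lower with $G_i$. This produces $c\,\vp_{\bar m}A_{jk}$-type terms, that is $c\,C_{j\bar m k}$ up to the factor $G$, plus contributions from $\bar\chi(\vp_{\bar m}\Gamma)$. The latter, by homogeneity of degree $-1$ in $\bar v$ and $0$ in $v$, should reduce via Euler's identities to multiples of $C$ contracted with $v$ or $\bar v$, hence zero. \textbf{This bookkeeping is the main obstacle.} Getting the exact coefficients right (so that $c$ really survives with a nonzero factor, e.g.\ $c G C_{j\bar m k}$ against $-cGC_{j\bar mk}/2$) is where the argument could fail. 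If it does, I would instead differentiate the contracted identity $r_{j\bar k}$ from Lemma~\ref{lem:r} and compare with $\vp_j\vp_{\bar k}(cG^2)$.

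Once $\vp_rg_{i\bar j}=0$ on $C$, $g_{i\bar j}$ depends only on $z$ over each fiber, using that the fibers $C_z$ are connected. Define $h_{i\bar j}(z)=g_{i\bar j}(z,v)$ for any $v\in C_z$. Euler's identity $G=g_{i\bar j}v^i\bar v^j$ from \eqref{eq:2.1}--\eqref{eq:2.2} gives \eqref{eq:3.1}. Smoothness of $h$ follows by choosing a local smooth section of $C$. Uniqueness holds because a Hermitian form is determined by its values on a nonempty open cone, by polarization and real-analyticity of quadratic forms. The Chern--Finsler connection of $G$ is then the Chern connection of $h$. The K\"ahler--Finsler condition $T=0$ gives the K\"ahler property of $h$. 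Finally, \eqref{eq:2.17} with $R^i=cGv^i$ gives holomorphic sectional curvature $2c$ on the open set $C_z$. Since holomorphic sectional curvature of $h$ is a real-analytic function of $[v]$ on projective space, and $K-2c$ times $|v|^4$ is a polynomial vanishing on an open set, it equals $2c$ in every nonzero direction.
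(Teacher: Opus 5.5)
There is a genuine gap, and it sits at the step you flag as bookkeeping.

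\textbf{The commutation you rely on is false.} One has $[\vp_j,\delta_{\bar\ell}]=-\overline{\vp_{\bar j}N^r_\ell}\,\vp_{\bar r}$. In general $N^r_\ell=g^{\bar a r}\partial_\ell G_{\bar a}$ is not holomorphic in $v$; it is holomorphic for Hermitian metrics, but that is what you are trying to prove. Under the K\"ahler--Finsler condition, $\vp_{\bar j}N^r_\ell=\tfrac12\vp_\ell B^r_{\bar j}$ with $B^r_{\bar j}=\vp_{\bar j}S^r$. By homogeneity $v^\ell\vp_\ell B=2B$, so the commutator vanishes on $C$ only if $B\equiv0$, which is half of the conclusion. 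Normal coordinates do not rescue this. The quantity $\vp_{\bar m}\Gamma^i_{jk}$ is tensorial, and only $N$ and $\Gamma$ are killed at $(p,v_0)$. Done correctly, a single $\vp_k$ applied to $\bar\chi S^i=-cGv^i$ gives, on all of $C$,
\[
2\bar\chi N^i_k-\overline{B^a_{\bar k}}\,B^i_{\bar a}=-c\bigl(G_kv^i+G\delta^i_k\bigr).
\]
So you have dropped a term quadratic in $B$. The commutator you discard is exactly what produces $\bar\chi A_{ij}=g_{i\bar a}\overline{B^a_{\bar j}}$ from $\delta_{\bar k}G_i=0$, where $A=(G_{ij})$.

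\textbf{The central identity is never derived, and your proposed mechanism cannot produce it.} You hope for $c\,(\text{multiple of }G)\,C=(\text{terms killed by }v^kC_{k\bar ar}=0)$. Homogeneity only annihilates contractions of $\vp_{\bar m}\Gamma$ with $\bar v^m$ or with $v$ in a differentiated slot, not its free components. In fact $G_i\vp_{\bar m}\Gamma^i_{jk}=-A_{rk}\vp_{\bar m}N^r_j$, and at a normal point $\bar\chi B=cG\overline A$. So the leftover terms are products of $A$, $B$ and their derivatives, of the same size as the term you want to isolate (recall $\bar v^aC_{k\bar ar}=A_{kr}$). Moreover, commuting $\vp_{\bar m}$ past $\bar\chi$ produces $\delta_{\bar m}\Gamma^i_{jk}$, hence the full curvature $R^i_{k;j\bar m}$. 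The radial identity does not determine this, and you give no way to eliminate it. Your fallback via $r_{j\bar k}$ yields information of the type in Lemma~\ref{lem:M}, not $A=0$.

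\textbf{The missing idea is the paper's nonlinear argument at a normal point.} It has three ingredients:
\begin{enumerate}
\item[(a)] $AB=0$ and $B^T=B$, from the K\"ahler symmetry (Lemma~\ref{lem:AB});
\item[(b)] $\bar\chi A=\overline B$, from $\delta_{\bar k}G_i=0$;
\item[(c)] $\bar\chi B=cG\overline A$, obtained by differentiating $\bar\chi S^i=-cGv^i$ once in $\bar v^\ell$ and computing $M^i_{\bar k}=\delta_{\bar k}S^i=-c(v^iG_{\bar k}+G\overline{A_{ik}})$ separately (Lemma~\ref{lem:M}).
\end{enumerate}
Applying $\bar\chi$ to $AB=0$ gives $\overline BB+cGA\overline A=0$. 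Then $0=\tr\bigl(AB(AB)^*\bigr)=-cG\tr\bigl((AA^*)^2\bigr)$ forces $A=0$, hence $B=0$, for either sign of $c$. Since $C_{k\bar ar}=\vp_{\bar a}A_{kr}$ and $\bar v^aC_{k\bar ar}=A_{kr}$, $A=0$ is equivalent to your $C=0$.

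Your concluding steps are fine and agree with the paper: defining $h$, smoothness, uniqueness, K\"ahlerness, and extending $K_h=2c$ from the cone via the bidegree-$(2,2)$ polynomial.
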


\subsection{Some identities of K\"ahler--Finsler metrics}
The next identities \eqref{eq:3.3} and  \eqref{eq:3.4} are needed for our
argument.

\begin{lemma}\label{lem:AB}
For every strongly pseudoconvex complex Finsler metric on \(C\), we have
\begin{align}
G_iB^i_{\bar\ell}&=0,\label{eq:3.2}\\
g_{i\bar k}B^i_{\bar\ell}&=g_{i\bar\ell}B^i_{\bar k}.
\label{eq:3.3}
\end{align}
If the metric is K\"ahler--Finsler, then
\begin{equation}
A_{ki}B^i_{\bar\ell}=0.\label{eq:3.4}
\end{equation}
Thus, in coordinates with \(g=I\), the matrices \(A,B\) are symmetric and
\(AB=0\).
\end{lemma}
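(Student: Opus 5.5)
The three identities all come from differentiating, in the fiber direction, relations that express the spray \(S^i=N^i_jv^j\) through fiber derivatives of \(G\). The basic relation comes from \eqref{eq:2.4}. Contracting \(g_{i\bar a}N^i_j=G_{\bar a;j}\) with \(v^j\) gives
\begin{equation*}
g_{i\bar a}S^i=v^jG_{\bar a;j}.
\end{equation*}
This holds for every strongly pseudoconvex metric on \(C\). It is the identity we will differentiate.

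For \eqref{eq:3.2}, contract the basic relation with \(\bar v^a\). By \eqref{eq:2.2} this gives \(G_iS^i=v^jG_{;j}\). Apply \(\vp_{\bar\ell}\) and use \(\vp_{\bar\ell}G_i=g_{i\bar\ell}\):
\begin{equation*}
g_{i\bar\ell}S^i+G_iB^i_{\bar\ell}=v^jG_{\bar\ell;j}.
\end{equation*}
The basic relation with \(a=\ell\) says that the first term on the left equals the right-hand side, so \(G_iB^i_{\bar\ell}=0\).

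For \eqref{eq:3.3}, apply \(\vp_{\bar\ell}\) directly to the basic relation with \(a=k\):
\begin{equation*}
G_{i\bar k\bar\ell}S^i+g_{i\bar k}B^i_{\bar\ell}=v^jG_{\bar k\bar\ell;j}.
\end{equation*}
Mixed partial derivatives of \(G\) commute, so the first term and the right-hand side are symmetric in \(k,\ell\). Hence \(g_{i\bar k}B^i_{\bar\ell}\) is symmetric as well.

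For \eqref{eq:3.4} we use the K\"ahler--Finsler hypothesis in two ways. First, contracting \(v^jT^i_{jk}=0\) with \(G_i\) shows that the metric is weakly K\"ahler--Finsler. Lemma~\ref{lem:weak} then gives
\begin{equation*}
v^jG_{k;j}-G_{;k}-S^rA_{kr}=0.
\end{equation*}
Applying \(\vp_{\bar\ell}\) yields
\begin{equation*}
v^jG_{k\bar\ell;j}-G_{\bar\ell;k}-B^r_{\bar\ell}A_{kr}-S^rG_{kr\bar\ell}=0.
\end{equation*}
Second, by Lemma~\ref{lem:strong} we have \(\vp_kS^i=2N^i_k\). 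Apply \(\vp_k\) to the basic relation with \(a=\ell\):
\begin{equation*}
G_{rk\bar\ell}S^r+2g_{r\bar\ell}N^r_k=G_{\bar\ell;k}+v^jG_{k\bar\ell;j}.
\end{equation*}
Since \(g_{r\bar\ell}N^r_k=G_{\bar\ell;k}\), this becomes
\begin{equation*}
v^jG_{k\bar\ell;j}=S^rG_{kr\bar\ell}+G_{\bar\ell;k}.
\end{equation*}
Substituting this into the previous display cancels every term except \(-A_{kr}B^r_{\bar\ell}\), so \(A_{kr}B^r_{\bar\ell}=0\).

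For the final remark, take coordinates with \(g=I\) at the point under consideration. Then \eqref{eq:3.3} says \(B^k_{\bar\ell}=B^\ell_{\bar k}\), so \(B\) is symmetric. The matrix \(A\) is symmetric by definition, and \eqref{eq:3.4} reads \(AB=0\).

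The main step is \eqref{eq:3.4}. It requires combining the weak condition \eqref{eq:2.13} with the stronger identity \eqref{eq:2.12}; the first two identities need no Kähler hypothesis at all. The only care needed is tracking the index placement in the third fiber derivatives and checking that the cancellation is exact.
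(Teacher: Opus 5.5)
Your proof is correct and is essentially the paper's argument. For \eqref{eq:3.2} and \eqref{eq:3.3} you differentiate the defining relation $g_{i\bar a}N^i_j=G_{\bar a;j}$ in $\bar v$ and use symmetry of mixed fiber derivatives. For \eqref{eq:3.4}, your two identities, namely \eqref{eq:2.13} and $v^jG_{k\bar\ell;j}=S^rG_{kr\bar\ell}+G_{\bar\ell;k}$, are exactly the $v^j$-contractions of the paper's symmetries $\delta_jG_k=\delta_kG_j$ and $\delta_jg_{k\bar\ell}=\delta_kg_{j\bar\ell}$. You contract with $v^j$ before applying $\vp_{\bar\ell}$ rather than after, which is harmless since $\vp_{\bar\ell}v^j=0$, and this uses only $v^jT^i_{jk}=0$ rather than the upgraded condition $T^i_{jk}=0$.
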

\begin{proof}
We differentiate \(G_iN^i_j=G_{;j}\) in \(\bar v^\ell\). Since
\(g_{i\bar\ell}N^i_j=G_{\bar\ell;j}\), it follows from the subtraction that
\[
G_i\vp_{\bar\ell}N^i_j=0.
\]
Contracting with \(v^j\), we obtain \eqref{eq:3.2}.
Differentiating \eqref{eq:3.2} with respect to \(\bar v^k\),
we get
\[
g_{i\bar k}B^i_{\bar\ell}
+G_i\vp_{\bar k}\vp_{\bar\ell}S^i=0.
\]
We exchange \(k\) and \(\ell\) and subtract to get \eqref{eq:3.3}.

Suppose now that the metric is K\"ahler--Finsler. By Lemma~\ref{lem:strong},
\[
\delta_jg_{k\bar\ell}=\delta_kg_{j\bar\ell},\qquad
\delta_jG_k=\delta_kG_j.
\]
Differentiating the second equality in \(\bar v^\ell\) and using the first
equality, we get
\[
A_{kr}\vp_{\bar\ell}N^r_j=A_{jr}\vp_{\bar\ell}N^r_k.
\]
After contraction with \(v^j\), it follows from \(A_{jr}v^j=0\) that  the right side vanishes,
while the left side becomes \eqref{eq:3.4}.
\end{proof}

\begin{lemma}\label{lem:M}
Suppose that \(F\) is  a K\"ahler--Finsler metric on \(C\) and
\[
R^i=cGv^i
\]
for a real constant \(c\) throughout \(C\). Then
\begin{equation}
r_{j\bar k}v^j=cGG_{\bar k}.\label{eq:3.5}
\end{equation}
At a point where \eqref{eq:2.23} holds, set
\(M^i_{\bar k}=\delta_{\bar k}S^i\). Then
\begin{equation}
M^i_{\bar k}=-c\bigl(v^iG_{\bar k}+G\overline{A_{ik}}\bigr).
\label{eq:3.6}
\end{equation}
\end{lemma}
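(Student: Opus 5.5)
The plan is to get both identities by differentiating the radial identity $R^i=cGv^i$ along the fibre and commuting the vertical derivative past the horizontal field $\bar\chi$. I expect the error terms to be killed by Lemma~\ref{lem:AB} for \eqref{eq:3.5} and by the normal coordinates of Lemma~\ref{lem:normal} for \eqref{eq:3.6}.

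\textbf{Identity \eqref{eq:3.5}.} By \eqref{eq:2.20}, $R^i=-\bar\chi(S^i)$. Apply $\vp_j$ to this and use the commutator
\[
[\vp_j,\bar\chi]=-\bar v^k\,\overline{\vp_{\bar j}N^r_k}\,\vp_{\bar r}.
\]
This gives
\[
\vp_jR^i=-\bar\chi(\vp_jS^i)+\bar v^k\,\overline{\vp_{\bar j}N^r_k}\,B^i_{\bar r}.
\]
Now contract with $G_i$. The last term vanishes by \eqref{eq:3.2}. By Lemma~\ref{lem:strong}, $\vp_jS^i=2N^i_j$. Since $\bar\chi(G_i)=0$ by \eqref{eq:2.8}, we get
\[
G_i\,\bar\chi(\vp_jS^i)=2G_i\,\bar\chi(N^i_j)=-2\,G_iR^i_{j\bar k}\bar v^k=-2\,r_{j\bar k}\bar v^k.
\]
On the other side, using \eqref{eq:2.1},
\[
G_i\,\vp_j(cGv^i)=c\,G_i\bigl(G_jv^i+G\delta^i_j\bigr)=2cGG_j.
\]
Comparing the two sides gives $r_{j\bar k}\bar v^k=cGG_j$. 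Conjugating this and using $r_{j\bar k}=\overline{r_{k\bar j}}$ from \eqref{eq:2.19} yields $r_{k\bar j}v^k=cGG_{\bar j}$, which is \eqref{eq:3.5} after renaming indices.

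\textbf{Identity \eqref{eq:3.6}.} First I would record that, since $\delta_{\bar k}v^j=0$ by \eqref{eq:2.22},
\[
M^i_{\bar k}=v^j\delta_{\bar k}N^i_j=-R^i_{j\bar k}v^j .
\]
Contracting with $G_i$ then gives, by \eqref{eq:3.5},
\[
G_iM^i_{\bar k}=-r_{j\bar k}v^j=-cGG_{\bar k}.
\]
This already agrees with the $G_i$-component of the right-hand side of \eqref{eq:3.6}, because $G_i\overline{A_{ik}}=0$ by the conjugate of \eqref{eq:2.3}.

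For the full vector I would differentiate $R^i=-\bar v^l\delta_{\bar l}S^i$ in $\bar v^k$:
\[
\vp_{\bar k}R^i=-M^i_{\bar k}-\bar v^l\,\vp_{\bar k}\delta_{\bar l}S^i .
\]
The commutator $[\vp_{\bar k},\delta_{\bar l}]=-\overline{\Gamma^r_{lk}}\,\vp_{\bar r}$ vanishes at a point where \eqref{eq:2.23} holds. What survives is
\[
M^i_{\bar k}=-c\,G_{\bar k}v^i-\bar\chi(B^i_{\bar k}) .
\]
It therefore remains to show $\bar\chi(B^i_{\bar k})=cG\,\overline{A_{ik}}$ at the normal point.

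\textbf{Main obstacle.} Computing $\bar\chi(B^i_{\bar k})$ is the step I expect to be hardest. At the normal point $N=0$, so $\delta_{\bar l}=\partial_{\bar l}$ there, and $B^i_{\bar k}=\vp_{\bar k}\bigl(g^{\bar a i}G_{\bar a;j}v^j\bigr)$. The derivative $\bar v^l\partial_{\bar l}$ produces mixed terms $G_{\bar a;j\bar l}$, which I would rewrite through $r_{j\bar l}$ using \eqref{eq:2.19}. It also produces $\partial_{\bar l}g^{\bar a i}$, which brings in fibre derivatives of $g$ contracted with $\bar v$, and these are exactly conjugates of $A$.

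To organize this I would split $\bar\chi(B^i_{\bar k})$ into its $G_i$-component and its $g$-orthogonal complement, working in coordinates where $g=I$. The $G_i$-component is controlled by \eqref{eq:3.2} and \eqref{eq:3.5}. On the complement I would use that $A$ and $B$ are symmetric with $AB=0$, from \eqref{eq:3.3} and \eqref{eq:3.4}, together with the Kähler symmetry of $\Gamma$ and the already-established value of $G_iM^i_{\bar k}$. The hope is that these force the remaining piece to equal $cG\,\overline{A_{ik}}$.

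If this direct computation becomes unwieldy, the fallback is to compute $v^j\delta_{\bar k}N^i_j$ directly from $N^i_j=\tfrac12\vp_jS^i$ (Lemma~\ref{lem:strong}). One would then use the conjugate of the $\vp_j$-differentiated identity obtained in the proof of \eqref{eq:3.5}, where the $\overline{A}$ term should appear from conjugating $\vp_{\bar r}$ acting on $G$.
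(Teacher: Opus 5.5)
Your proof of \eqref{eq:3.5} is correct, and it takes a different route from the paper. You apply $\vp_j$ to $R^i=-\bar\chi(S^i)$, kill the commutator term with $G_iB^i_{\bar r}=0$, and use $\vp_jS^i=2N^i_j$. This gives $r_{j\bar k}\bar v^k=cGG_j$ on all of $C$ at once, without normal coordinates. The paper instead works at a normal point: it differentiates the strong K\"ahler identity in $\bar z^k$ to get \eqref{eq:3.8}--\eqref{eq:3.9}, then differentiates \eqref{eq:3.10} in $\bar v^\ell$. Your version is shorter and coordinate-free.

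The gap is in \eqref{eq:3.6}. Your identity $M^i_{\bar k}=-cG_{\bar k}v^i-\bar\chi(B^i_{\bar k})$ at the normal point is right; it is the paper's \eqref{eq:3.17}. But it only trades \eqref{eq:3.6} for the equivalent claim $\bar\chi B^i_{\bar k}=cG\overline{A_{ik}}$. In the paper that claim is \eqref{eq:3.18}, and it is \emph{deduced from} \eqref{eq:3.6}, not the other way round. Nothing on your list supplies it:
\begin{itemize}
\item Symmetry of $A,B$ and $AB=0$ are pointwise relations and cannot determine the first-order quantity $\bar\chi B$. Even applying $\bar\chi$ to $AB=0$ only constrains $A\,\bar\chi B$.
\item The value of $G_iM^i_{\bar k}$ fixes only one component of the vector.
\item Your fallback through $N^i_j=\tfrac12\vp_jS^i$ just returns $v^j\delta_{\bar k}N^i_j=\delta_{\bar k}S^i$ by homogeneity, since $\bar v^\ell\vp_{\bar\ell}N^r_k=0$.
\end{itemize}

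The missing step is to differentiate \eqref{eq:3.5}, which you already have on all of $C$, in $\bar v^m$. Since $r_{j\bar k}=G_iR^i_{j\bar k}$ and $\vp_{\bar m}G_i=g_{i\bar m}$,
\[
c\bigl(G_{\bar m}G_{\bar k}+GG_{\bar m\bar k}\bigr)
=\vp_{\bar m}\bigl(r_{j\bar k}v^j\bigr)
=g_{i\bar m}R^i_{j\bar k}v^j+G_iv^j\,\vp_{\bar m}R^i_{j\bar k}.
\]
The last term vanishes. Indeed, $\vp_{\bar m}R^i_{j\bar k}=-\delta_{\bar k}(\vp_{\bar m}N^i_j)+\overline{\Gamma^r_{km}}\,\vp_{\bar r}N^i_j$. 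After contraction with $G_i$, both pieces are killed by $G_i\vp_{\bar\ell}N^i_j=0$ (the first step in the proof of \eqref{eq:3.2}) together with $\delta_{\bar k}G_i=0$. Using $R^i_{j\bar k}v^j=-M^i_{\bar k}$, you get $g_{i\bar m}M^i_{\bar k}=-c\bigl(G_{\bar m}G_{\bar k}+GG_{\bar m\bar k}\bigr)$. At the normal point $g=I$, $G_{\bar m}=v^m$ and $G_{\bar m\bar k}=\overline{A_{mk}}$, which is exactly \eqref{eq:3.6}.

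The point is that $\vp_{\bar m}$ falling on $G_i$ produces $g_{i\bar m}$, and this upgrades your single $G_i$-component to the full vector. The paper does the same differentiation at the normal point, written as $\vp_{\bar i}r_{j\bar k}=-\vp_{\bar i}P_{j\bar k}$ and $v^j\vp_{\bar i}P_{j\bar k}=v^j\partial_{\bar k}N^i_j=M^i_{\bar k}$.
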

\begin{proof}
Fix \((p,v_0)\in C\) and choose normal coordinates as in
Lemma~\ref{lem:normal}.
Unless otherwise stated, the following identities are evaluated
at \((p,v_0)\).
Set \(P_{j\bar k}=G_{;j\bar k}\).
Since \(N^i_j=0\), from the curvature condition and \eqref{eq:2.20}, we have
\begin{equation}
M^i_{\bar k}\bar v^k=-cGv^i,\qquad
M^i_{\bar k}=v^j\partial_{\bar k}N^i_j.
\label{eq:3.7}
\end{equation}
Differentiate the strong K\"ahler identity
\(\delta_jg_{\ell\bar s}=\delta_\ell g_{j\bar s}\) in \(\bar z^k\).
Because \(N^i_j=0\) at the point, we obtain
\[
P_{j\bar k,\ell\bar s}-P_{\ell\bar k,j\bar s}
=(\partial_{\bar k}N^r_j)G_{\ell r\bar s}
-(\partial_{\bar k}N^r_\ell)G_{jr\bar s}.
\]
Here the comma indicates fiber differentiation of \(P\). Contracting first
with \(\bar v^s\) and then with \(v^j\), we get
\begin{equation}
v^j\vp_\ell P_{j\bar k}=P_{\ell\bar k}+A_{\ell r}M^r_{\bar k}.
\label{eq:3.8}
\end{equation}
Here one first uses
\[
\bar v^sG_{\ell r\bar s}=A_{\ell r},\qquad
\bar v^sP_{j\bar k,\ell\bar s}=\vp_\ell P_{j\bar k},
\]
and then \(v^j\vp_jP_{\ell\bar k}=P_{\ell\bar k}\) and \(v^jA_{jr}=0\).

Since \(P\) is Hermitian, conjugating \eqref{eq:3.8}, we have
\[
\bar v^s\vp_{\bar\ell}P_{j\bar s}
=P_{j\bar\ell}+\overline{A_{\ell r}M^r_{\bar j}}.
\]
Contracting with $v^j$ and using the conjugate of \eqref{eq:3.7}, we find
\[
v^j\overline{A_{\ell r}M^r_{\bar j}}
=\overline{A_{\ell r}}\,
  \overline{\bar v^jM^r_{\bar j}}
=-cG\overline{A_{\ell r}}\bar v^r=0.
\]
Therefore,
\begin{equation}
v^j\bar v^s\vp_{\bar\ell}P_{j\bar s}=v^jP_{j\bar\ell}.
\label{eq:3.9}
\end{equation}
It follows from \eqref{eq:2.15}  and   \eqref{eq:2.19} that 
\begin{equation}
r_{j\bar k}v^j\bar v^k=cG^2=-P_{j\bar k}v^j\bar v^k+g_{a\bar b}S^a\overline{S^b} \label{eq:3.10}
\end{equation}
on \(C\). 
At point \((p,v_0)\), \(S^a=0\)  and $\vp_{\bar\ell} \overline{S^b}=0$. Differentiating \eqref{eq:3.10} in \(\bar v^\ell\) and using
\eqref{eq:3.9}, we obtain
\[
-2v^jP_{j\bar\ell}=2cGG_{\bar\ell}.
\]
Since \(r_{j\bar k}=-P_{j\bar k}\) at point \((p,v_0)\), this is \eqref{eq:3.5}.
Both sides of \eqref{eq:3.5} are covectors. Since $(p,v_0)$ is
arbitrary, \eqref{eq:3.5} holds on all $C$. 

Differentiating \eqref{eq:3.5} with respect to \(\bar v^i\), we obtain
\[
v^j\vp_{\bar i}r_{j\bar k}
=c\bigl(G_{\bar i}G_{\bar k}+GG_{\bar i\bar k}\bigr).
\]
At the chosen point, \(N^i_j=0\), so every term in
\[
\vp_{\bar i}
\bigl(g_{a\bar b}N^a_j\overline{N^b_k}\bigr)
\]
vanishes. Hence \eqref{eq:2.19} gives
\[
\vp_{\bar i}r_{j\bar k}
=-\vp_{\bar i}P_{j\bar k}.
\]
 Differentiating
the  identity \(G_{\bar i;j}=g_{a\bar i}N^a_j\) in the
{base} variable \(\bar z^k\), we get 
\[
\vp_{\bar i}P_{j\bar k}
=(\partial_{\bar k}g_{a\bar i})N^a_j
+g_{a\bar i}\partial_{\bar k}N^a_j.
\]
We use \(N^a_j=0\) and \(g=I\) at point $(p,v_0)$, and obtain
\[
v^j\vp_{\bar i}P_{j\bar k}
=v^j\partial_{\bar k}N^i_j=M^i_{\bar k}.
\]
It follows that
\[
-M^i_{\bar k}=c\bigl(G_{\bar i}G_{\bar k}+GG_{\bar i\bar k}\bigr).
\]
At point $(p,v_0)$, \(G_{\bar i}=v^i\) and
\(G_{\bar i\bar k}=\overline{A_{ik}}\), that is, \eqref{eq:3.6} is valid.
\end{proof}
\begin{remark}
Suppose that $F$ is induced by a Hermitian K\"ahler metric,
$G=h_{a\bar b}(z)v^a\bar v^b$. Then $A=0$, and
$S^i=\Gamma^i_{ab}(z)v^av^b$ is holomorphic in $v$, so $B=0$.
The radial identity \eqref{eq:2.18} becomes
\begin{equation}
-\partial_{\bar k}\Gamma^i_{ab}\,v^av^b\bar v^k
=c\,h_{a\bar b}v^av^i\bar v^b.
\label{eq:3.11}
\end{equation}
This is an identity in all fiber variables. Differentiating first in
$\bar v^k$ and then in $v^j$ and $v^\ell$, and using
$\Gamma^i_{j\ell}=\Gamma^i_{\ell j}$, we get
\[
-2\partial_{\bar k}\Gamma^i_{j\ell}
=c\bigl(\delta^i_jh_{\ell\bar k}
       +\delta^i_\ell h_{j\bar k}\bigr).
\]
Consequently,
\begin{equation}
R_{j\bar a\ell\bar k}
=\frac c2\bigl(h_{j\bar a}h_{\ell\bar k}
             +h_{\ell\bar a}h_{j\bar k}\bigr).
\label{eq:3.12}
\end{equation}
Conversely, contracting \eqref{eq:3.12} gives \eqref{eq:2.18}. Thus, in the Hermitian K\"ahler case, the radial
identity \eqref{eq:2.18} is equivalent to constant holomorphic sectional curvature
$2c$.
\end{remark}

\subsection{Proof of Theorem 3.1 and the global conclusions}
The identity \eqref{eq:3.13} is the coordinate identity that underlies the calculations of curvature and torsion in \cite{AP97}. We derive it directly.

\begin{lemma}\label{lem:rigidity}
Under the assumptions of Lemma~\ref{lem:M}, at the point in the normal
coordinates \eqref{eq:2.23},
\begin{equation}
\overline B B+cGA\overline A=0.\label{eq:3.13}
\end{equation}
Equivalently, because \(B^T=B\),
\begin{equation}
B^*B=-cGA\overline A.\label{eq:3.14}
\end{equation}
\end{lemma}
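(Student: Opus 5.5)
The plan is to produce $\overline B B$ by differentiating the radial identity once in the \emph{unbarred} fiber variable, and then to identify the remaining curvature term by means of Lemma~\ref{lem:M}. We start from \eqref{eq:2.20} and \eqref{eq:2.18}, which hold on all of $C$:
\[
\bar v^k\delta_{\bar k}S^i=-cGv^i .
\]
Apply $\vp_j$ to both sides. The commutator is $[\vp_j,\delta_{\bar k}]=-(\vp_j\overline{N^r_k})\vp_{\bar r}$. After contraction with $\bar v^k$, the coefficient becomes
\[
\bar v^k\vp_j\overline{N^r_k}=\overline{\vp_{\bar j}S^r}=\overline{B^r_{\bar j}} .
\]
By Lemma~\ref{lem:strong} we also have $\vp_jS^i=2N^i_j$. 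This gives, on $C$,
\[
2\bar v^k\delta_{\bar k}N^i_j-\overline{B^r_{\bar j}}\,B^i_{\bar r}
=-c\bigl(G_jv^i+G\delta^i_j\bigr),
\]
that is,
\[
-2R^i_{j\bar k}\bar v^k-(\overline B B)^i{}_j=-c\bigl(G_jv^i+G\delta^i_j\bigr).
\]
Here we use the symmetry $B^T=B$ from \eqref{eq:3.3} with $g=I$ at the point. The term $\overline BB$ thus appears naturally, and it remains to show that, at $(p,v_0)$ in the normal coordinates \eqref{eq:2.23},
\[
2R^i_{j\bar k}\bar v^k=c\bigl(G_jv^i+G\delta^i_j\bigr)+cG(A\overline A)^i{}_j .
\]

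For this second step I would compute $R^i_{j\bar k}\bar v^k$ at the normal point from the data already found in Lemma~\ref{lem:M}. Under the K\"ahler--Finsler condition the full curvature $R^i_{s;j\bar k}$ in \eqref{eq:2.16} is symmetric in $s,j$, modulo the $C$-term, which vanishes after contraction with $v^s$ because $N=0$ at the point. Hence $R^i_{j\bar k}\bar v^k$ can be rewritten through the contraction of $-\delta_{\bar k}\Gamma^i_{js}$ with $v^s$ and $\bar v^k$. This in turn should be expressible through vertical derivatives of $M^i_{\bar k}=\delta_{\bar k}S^i=-R^i_{j\bar k}v^j$. Alternatively one can pass through the Hermitian symmetry $r_{j\bar k}=\overline{r_{k\bar j}}$ of \eqref{eq:2.19}, lowering the index with $g=I$ and using \eqref{eq:3.5}.

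The key input is \eqref{eq:3.6}, $M^i_{\bar k}=-c\bigl(v^iG_{\bar k}+G\overline{A_{ik}}\bigr)$. The $G\overline A$ part, once it is paired with the symmetric matrix $A$ coming from $g_{i\bar j}$ and its vertical derivatives (via $A_{ij}=G_{ij}$ and $\vp_rg_{i\bar j}$), is what should yield $cGA\overline A$. In carrying this out we use $AB=0$ from \eqref{eq:3.4}, $G_iB^i_{\bar\ell}=0$ from \eqref{eq:3.2}, and $Av=0$ to discard the cross terms.

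Finally, substituting this expression into the identity of the first paragraph makes the terms $c(G_jv^i+G\delta^i_j)$ cancel, leaving $\overline BB+cGA\overline A=0$, which is \eqref{eq:3.13}. Conjugating and using $B^T=B$ then gives \eqref{eq:3.14}.

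The main obstacle is the second step. One must compute $\bar v^kR^i_{j\bar k}$ exactly, including the quadratic term $GA\overline A$, even though \eqref{eq:3.6} is only a pointwise identity in special coordinates and cannot itself be differentiated. My first attempt would be to differentiate the tensorial identity \eqref{eq:3.5} in $v^\ell$ on all of $C$, and only afterwards evaluate at $(p,v_0)$. In that evaluation, the $\vp_\ell\bigl(g_{a\bar b}N^a_j\overline{N^b_k}\bigr)$ contributions survive only through $\vp_\ell\overline{N}$, that is, through $\overline B$ and $\overline{M}$. The second factor $A$ should then enter from $\vp_\ell G_{\bar k}$ evaluated at $(p,v_0)$.
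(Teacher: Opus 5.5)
Your first step is correct. Applying $\vp_j$ to $\bar\chi S^i=-cGv^i$, with $[\vp_j,\delta_{\bar k}]=-(\vp_j\overline{N^r_k})\vp_{\bar r}$ and $\vp_jS^i=2N^i_j$, gives on $C$
\[
2\,\bar\chi N^i_j-B^i_{\bar r}\overline{B^r_{\bar j}}=-c\bigl(G_jv^i+G\delta^i_j\bigr).
\]
The quadratic term here is $(B\overline B)_{ij}$, the conjugate of $\overline BB$. This is harmless, since $c$ and $G$ are real and you can conjugate at the end.

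However, this step only trades $\overline BB$ for the unknown matrix $\bar\chi N^i_j=-R^i_{j\bar k}\bar v^k$. Computing that matrix is the entire content of the lemma, and you leave it as a sketch whose proposed mechanisms do not work:
\begin{itemize}
\item Contracting $\vp_jM^i_{\bar k}$ with $\bar v^k$ is circular. It equals $\vp_j(\bar v^kM^i_{\bar k})=\vp_j(-cGv^i)$, which is your first identity again.
\item Exchanging the (lowered) upper index of $R^i_{j\bar k}$ with the slot $\bar k$ is not an a priori symmetry of the Chern--Finsler curvature. At the normal point one finds
\[
R^i_{j\bar k}\bar v^k=r_{j\bar i}-\overline{A_{ir}M^r_{\bar j}}=r_{j\bar i}+cG(\overline AA)_{ij}.
\]
If you treated the exchange as a symmetry, the computation would return $\overline BB=+cGA\overline A$, the opposite sign.
\item In $\vp_\ell$ of \eqref{eq:3.5}, the terms $\vp_\ell(g_{a\bar b}N^a_j\overline{N^b_k})$ vanish entirely at the point, because each keeps an undifferentiated $N$ or $\overline N$. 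Also, $\vp_\ell G_{\bar k}=g_{\ell\bar k}=\delta_{\ell k}$ contains no $A$.
\end{itemize}
The factor $A$ actually enters through \eqref{eq:3.8}, the $\bar z$-derivative of the strong K\"ahler identity $\delta_jg_{\ell\bar s}=\delta_\ell g_{j\bar s}$. Your sketch never uses it.

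Your route can be closed as follows.
\begin{itemize}
\item At the normal point, $\bar\chi N^i_j=\bar v^k\partial_{\bar k}N^i_j=\bar v^k\vp_{\bar i}P_{j\bar k}$. This follows by differentiating $G_{\bar i;j}=g_{a\bar i}N^a_j$ in $\bar z^k$, as in the proof of Lemma~\ref{lem:M}.
\item The conjugate of \eqref{eq:3.8}, together with \eqref{eq:3.6} and $\overline A\,\bar v=0$, then gives $\bar\chi N^i_j=P_{j\bar i}-cG(\overline AA)_{ij}$.
\item Differentiating \eqref{eq:3.5} in $v^\ell$, and using \eqref{eq:3.8}, \eqref{eq:3.6} and $r=-P$ at the point, gives
\[
2r_{\ell\bar k}=c\bigl(\bar v^\ell v^k+G\delta_{\ell k}\bigr)-cG(A\overline A)_{\ell k}.
\]
\item Hence $2\bar\chi N^i_j=-c(v^i\bar v^j+G\delta_{ij})-cG(\overline AA)_{ij}$. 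Substituting into your first identity yields $B\overline B+cG\overline AA=0$, the conjugate of \eqref{eq:3.13}.
\end{itemize}
This version never uses $AB=0$, but it needs \eqref{eq:3.8} twice.

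The paper instead differentiates the radial identity in $\bar v^\ell$. That produces $M^i_{\bar\ell}$, whose value is already known from \eqref{eq:3.6}, so $\bar\chi B=cG\overline A$ drops out at once. Combined with $\bar\chi A=\overline B$ (from $\vp_j$ of $\delta_{\bar k}G_i=0$) and the product rule applied to $AB=0$, \eqref{eq:3.13} follows in two lines.
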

\begin{proof}
Differentiating the identity \(\delta_{\bar k}G_i=0\) in \(v^j\),
we get, in any coordinates,
\[
\delta_{\bar k}A_{ij}=g_{i\bar a}\vp_j\overline{N^a_k}.
\]
After contraction with \(\bar v^k\),
\begin{equation}
\bar\chi A_{ij}=g_{i\bar a}\overline{B^a_{\bar j}}.
\label{eq:3.15}
\end{equation}
Thus \(\bar\chi A=\overline B\) at the normal point, where \(g=I\).

Now we differentiate the cone identity \(\bar\chi S^i=-cGv^i\) in
\(\bar v^\ell\). The right side gives
\begin{equation}
\vp_{\bar\ell}(\bar\chi S^i)
=-c\vp_{\bar\ell}(Gv^i)=-cG_{\bar\ell}v^i,
\label{eq:3.16}
\end{equation}
because \(\vp_{\bar\ell}v^i=0\). For the left side, write \(\bar\chi\) in
coordinates,
\[
\bar\chi=\bar v^k\delta_{\bar k}
=\bar v^k\partial_{\bar k}-\overline{S^a}\vp_{\bar a},
\qquad \overline{S^a}=\bar v^k\overline{N^a_k},
\]
and compute its commutator with \(\vp_{\bar\ell}\). Since
\(\vp_{\bar\ell}\bar v^k=\delta^k_\ell\) and
\([\vp_{\bar\ell},\vp_{\bar a}]=0\),
\[
[\vp_{\bar\ell},\bar\chi]
=\partial_{\bar\ell}-(\vp_{\bar\ell}\overline{S^a})\vp_{\bar a}.
\]
Conjugating the K\"ahler identity \(\vp_kS^i=2N^i_k\) of \eqref{eq:2.12}, we obtain
 \(\vp_{\bar\ell}\overline{S^a}=2\overline{N^a_\ell}\). Hence
\[
[\vp_{\bar\ell},\bar\chi]
=\partial_{\bar\ell}-2\overline{N^a_\ell}\vp_{\bar a}
=\delta_{\bar\ell}-\overline{N^a_\ell}\vp_{\bar a}.
\]
Applying this operator identity to \(S^i\) and using
\(\vp_{\bar a}S^i=B^i_{\bar a}\), we have
\[
\begin{aligned}
\vp_{\bar\ell}(\bar\chi S^i)
&=(\delta_{\bar\ell}-\overline{N^a_\ell}\vp_{\bar a})S^i
+\bar\chi(\vp_{\bar\ell}S^i)\\
&=\delta_{\bar\ell}S^i-\overline{N^a_\ell}B^i_{\bar a}
+\bar\chi B^i_{\bar\ell}.
\end{aligned}
\]
At the normal point \(N=0\), the middle term vanishes, and
\(\delta_{\bar\ell}S^i=M^i_{\bar\ell}\). Comparing with \eqref{eq:3.16}, we get
\begin{equation}
M^i_{\bar\ell}+\bar\chi B^i_{\bar\ell}=-cG_{\bar\ell}v^i.
\label{eq:3.17}
\end{equation}
Combining \eqref{eq:3.17} with \eqref{eq:3.6}, at the normal point, we have
\begin{equation}
\bar\chi B^i_{\bar\ell}=cG\overline{A_{i\ell}}.
\label{eq:3.18}
\end{equation}
The identity $A_{ki}B^i_{\bar\ell}=0$ holds in the coordinate
neighborhood. Applying $\bar\chi$ before evaluating at the normal
point, we get
\[
0=(\bar\chi A_{ki})B^i_{\bar\ell}
  +A_{ki}\bar\chi B^i_{\bar\ell}.
\]
At the normal point, it follows from \eqref{eq:3.15} and \eqref{eq:3.18} that
\[
0=\overline{B^k_{\bar i}}B^i_{\bar\ell}
  +cGA_{ki}\overline{A_{i\ell}}.
\]
Hence $\overline B B+cGA\overline A=0$.
Since $B^T=B$ at the normal point, $B^*=\overline B$. Therefore,
$B^*B=-cGA\overline A$.
\end{proof}

\begin{lemma}\label{lem:algebra}
Let \(A,B\) be complex symmetric square matrices and let
\(\lambda\in\R\setminus\{0\}\). If
\[
AB=0,\qquad B^*B=\lambda A\overline A,
\]
then \(A=B=0\).
\end{lemma}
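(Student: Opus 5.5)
Since both matrices are complex symmetric, I will first rewrite the hypotheses in Hermitian terms and then conjugate the second identity by \(A\), which the relation \(AB=0\) kills on one side.

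The key observation is that for a complex symmetric matrix \(A\) we have \(\overline A=\overline{A^T}=A^*\). Hence the hypothesis reads
\[
B^*B=\lambda AA^*.
\]
Transposing \(AB=0\) and using \(A^T=A\), \(B^T=B\), I also get
\[
BA=B^TA^T=(AB)^T=0 .
\]

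Next I multiply the identity \(B^*B=\lambda AA^*\) on the left by \(A^*\) and on the right by \(A\). The left-hand side becomes
\[
A^*B^*BA=(BA)^*(BA)=0 .
\]
The right-hand side becomes
\[
\lambda A^*AA^*A=\lambda (A^*A)^2 .
\]
Since \(\lambda\neq0\), this gives \((A^*A)^2=0\).

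The Hermitian matrix \(H=A^*A\) is diagonalizable with real eigenvalues. So \(H^2=0\) forces \(H=0\); concretely, \(\tr(H^2)=\tr(H^*H)=\|H\|^2\) in the Frobenius norm. Then \(\tr(A^*A)=0\), which means the Frobenius norm of \(A\) vanishes, so \(A=0\). Substituting back, \(B^*B=\lambda AA^*=0\), and taking the trace gives \(\|B\|^2=0\), so \(B=0\).

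No step is really an obstacle. The only point needing care is to use the symmetry of \(B\) to pass from \(AB=0\) to \(BA=0\), since that is what makes \(A^*B^*BA\) vanish. The argument works for either sign of \(\lambda\); for \(\lambda<0\) one could also conclude at once by comparing traces of the two positive semidefinite sides.
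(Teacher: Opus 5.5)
Your proof is correct and is essentially the paper's argument. The paper uses the symmetry to conjugate the second identity into $BB^*=\lambda A^*A$ and then computes $0=\tr\bigl((AB)(AB)^*\bigr)=\lambda\tr\bigl((AA^*)^2\bigr)$, whereas you use it to transpose $AB=0$ into $BA=0$ and multiply $B^*B=\lambda AA^*$ by $A^*$ on the left and $A$ on the right; both routes finish with the same positivity argument for a Hermitian positive semidefinite matrix.
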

\begin{proof}
For symmetric matrices, \(A^*=\overline A\) and \(B^*=\overline B\).
Conjugating the second identity, we get \(BB^*=\lambda A^*A\).
Therefore
\[
0=\tr\bigl((AB)(AB)^*\bigr)
=\tr(ABB^*A^*)=\lambda\tr\bigl((AA^*)^2\bigr).
\]
The matrix \(AA^*\) is Hermitian positive semidefinite. Since
\(\lambda\ne0\), all its eigenvalues vanish, and \(A=0\).
The equation for \(B^*B\) then yields \(B=0\).
\end{proof}

\begin{proof}[Proof of Theorem~\ref{theo:cone}]
Fix a point of \(C\) and choose normal  coordinates as in
Lemma~\ref{lem:normal}. Lemmas~\ref{lem:AB} and
\ref{lem:rigidity} give the assumptions of
Lemma~\ref{lem:algebra} with \(\lambda=-cG\ne0\).
Thus \(G_{ij}=0\) at this point. This condition is independent
of the coordinates. Since the point was arbitrary,
\[
G_{ij}=0\qquad\text{on }C.
\]
Differentiating this identity and its complex conjugate, we get
\[
\vp_k g_{i\bar j}
=\vp_{\bar j}G_{ik}=0,
\qquad
\vp_{\bar k}g_{i\bar j}
=\vp_iG_{\bar j\bar k}=0.
\]
Since \(C_z\) is connected, \(g_{i\bar j}(z,v)\) is independent
of \(v\in C_z\). We therefore set
\[
h_{i\bar j}(z)=g_{i\bar j}(z,v),
\qquad v\in C_z.
\]

We next show that \(h\) is smooth. Fix \(z_0\) and choose local
coordinates and a vector \(v_0\in C_{z_0}\). Since \(C\) is open,
the same coordinate vector \(v_0\) belongs to \(C_z\) for all
\(z\) near \(z_0\). Hence
\[
h_{i\bar j}(z)=g_{i\bar j}(z,v_0)
\]
is smooth. The matrix \(h\) is positive definite and Hermitian
because \(g\) is. Its components also have the same transformation
rule as those of \(g\). Thus \(h\) defines a Hermitian metric on \(V
\).
Euler's identity gives
\[
G(z,v)=g_{i\bar j}(z,v)v^i\bar v^j
=h_{i\bar j}(z)v^i\bar v^j
\qquad ((z,v)\in C),
\]
which proves \eqref{eq:3.1}.
Any other Hermitian metric satisfying \eqref{eq:3.1} must have
components \(G_{i\bar j}=h_{i\bar j}\) on \(C\), so \(h\) is unique.

On \(C\), the K\"ahler condition becomes
\[
\partial_jh_{k\bar\ell}
=\partial_kh_{j\bar\ell}.
\]
Thus \(h\) is K\"ahler. Since \(g\) is independent of \(v\),
the Chern--Finsler connection of \(F\) agrees on \(C\) with the
pullback of the Chern connection of \(h\). The radial curvature
identity then gives
\[
K_h(z,v)=2c\qquad(v\in C_z).
\]

For fixed \(z\), the expression
\[
\bigl(K_h(z,v)-2c\bigr)h_z(v,v)^2
\]
extends to a polynomial in \(v,\bar v\) of bidegree \((2,2)\).
It vanishes on the nonempty open set \(C_z\), and hence vanishes
everywhere. Therefore \(K_h(z,v)=2c\) for every nonzero \(v\).
\end{proof}

\begin{proof}[Proof of Theorem~\ref{thm:main}]
Take \(V=M\) and \(C=T^{1,0}M\setminus\{0\}\). Each fiber
\(C_p=T^{1,0}_pM\setminus\{0\}\) is nonempty and connected.
The radial curvature assumption gives \(R^i=cGv^i\) on \(C\).
Thus, by Theorem~\ref{theo:cone}, we obtain a smooth Hermitian
K\"ahler metric \(h\) on \(M\) such that
\[
F(p,v)^2=h_p(v,v)
\qquad
(p\in M,\ v\in T^{1,0}_pM\setminus\{0\}).
\]
Moreover, \(h\) has constant holomorphic sectional curvature \(2c\).
This proves the theorem.
\end{proof}

\begin{proof}[Proof of Theorem~\ref{thm:classification}]
By Proposition~\ref{prop:AP}, the assumptions \(K_F=2c\) and
\eqref{eq:1.2} imply the curvature identity \eqref{eq:1.1}.
It follows from Theorem~\ref{thm:main} that \(F\) is induced by a
Hermitian K\"ahler metric \(h\) of holomorphic sectional curvature \(2c\).

Now assume that \(c<0\), \(M\) is simply connected, and \(F\) is complete.
Since \(F(z;v)^2=h_z(v,v)\), the metric \(h\) is also complete.
Using the classification of complete simply connected K\"ahler manifolds
with constant holomorphic sectional curvature (\cite{KN}), we obtain a
biholomorphic isometry
\[
(M,F)\cong(\B^n,a\kappa_{\B^n})
\]
for some \(a>0\). Since \(\kappa_{\B^n}\) has holomorphic sectional
curvature \(-4\), the  metric \(a\kappa_{\B^n}\) has curvature
\(-4/a^2\). Hence
\[
-\frac{4}{a^2}=2c,
\qquad
a=\sqrt{-\frac{2}{c}},
\]
which proves the assertion.
\end{proof}

\begin{remark}
For \(c=0\), from \eqref{eq:3.14} it follows that \(B=0\), but it does not force \(A=0\).
Indeed, for \(n\ge2\) and sufficiently small \(\varepsilon>0\),
\[
G(v)=|v|^2+\varepsilon\frac{|v^1|^4}{|v|^2}
\]
is the square of a smooth positive, nonquadratic complex Minkowski norm;
its fiber Levi tensor is positive definite by the compactness of the unit
sphere. It is independent of the base point, so \(N=0\), its horizontal
torsion vanishes, and its radial curvature is zero.
\end{remark}

\section{Proof of Theorem  1.3}
Let \(\D=\{\zeta\in\C:|\zeta|<1\}\) be the unit disk.
We use the normalization
\[
\kappa_{\D}(\zeta;\xi)=\frac{|\xi|}{1-|\zeta|^2}.
\] 
\begin{lemma}\label{lem:retract}
Let \(D\subset\C^n\) be a domain. Suppose that \(\kappa_D\) is smooth and
strongly pseudoconvex near \((p,v)\in T^{1,0}D\setminus\{0\}\). If there
are holomorphic maps
\[
\varphi:\D\to D,\quad q:D\to\D,\quad
q\circ\varphi=\id_{\D},\quad\varphi(0)=p,\quad
v\in\C^*\varphi'(0),
\]
then, at \((p,v)\), we  have
\[
v^jG_{k;j}-G_{;k}-S^rA_{kr}=0\qquad(k=1,\dots,n).
\]
and
\begin{equation}
R^i=-2Gv^i.\label{eq:4.1}
\end{equation}
\end{lemma}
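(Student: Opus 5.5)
The approach is to compare $G=\kappa_D^2$ with its pullback $\tilde G=\varphi^*G$ and push-forward bound coming from $q$. By the decreasing property of the Kobayashi metric under holomorphic maps,
\[
\kappa_{\D}(q(z);q'(z)w)\le\kappa_D(z;w)\quad\text{for all }(z,w),
\qquad
\kappa_D(\varphi(\zeta);\varphi'(\zeta)\xi)=\kappa_{\D}(\zeta;\xi),
\]
where the equality holds because $q\circ\varphi=\id_{\D}$ forces both inequalities to be equalities. Hence the smooth function
\[
\Phi(z,w)=G(z,w)-\kappa_{\D}(q(z);q'(z)w)^2\ \ge 0
\]
is defined near $(p,v)$. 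It vanishes along the complex curve $\Sigma=\{(\varphi(\zeta),\lambda\varphi'(\zeta))\}$ in $\Mt$, which passes through $(p,v)$ after rescaling by $\lambda\in\C^*$ (both sides are $|\lambda|^2$-homogeneous). We have $q'(z)w\neq0$ near $\Sigma$, since $q'(\varphi(\zeta))\varphi'(\zeta)=1$. Therefore $H(z,w)=\kappa_{\D}(q(z);q'(z)w)^2$ is smooth there, and $\Phi$ attains a minimum value $0$ along $\Sigma$. It follows that $\dd\Phi=0$ on $\Sigma$, so $G$ and $H$ have the same first derivatives in all variables $(z,w,\bar z,\bar w)$ at points of $\Sigma$.

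Step one is the torsion identity. The weak Kähler expression $v^jG_{k;j}-G_{;k}-S^rA_{kr}$ involves $N$, hence $g^{-1}$ and $G_{\bar a;j}$, so it uses second derivatives, and first-order contact alone is not enough. I would instead argue via extremal discs. Since $\Phi\ge0$ vanishes on $\Sigma$, along $\Sigma$ the map $\varphi$ (lifted by $\zeta\mapsto(\varphi,\varphi')$) is a complex geodesic, so the spray satisfies the geodesic equation. The strategy is then to use that the spray and the first derivatives of $G$ along $\Sigma$ coincide with those of $H$. By \eqref{eq:2.13} the quantity equals $v^j[\delta_j G_k-\delta_k G_j]$ (see \eqref{eq:2.14}), and it involves $N$ only through $S^r$ and the contraction $G_iN^i_j=G_{;j}$. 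I expect a cleaner route to be the following. $H=q^*\kappa_{\D}^2$ is the pullback of a Hermitian Kähler metric under a holomorphic submersion in the relevant directions. The expression $G_iT^i_{jk}v^j$ from \eqref{eq:2.11} can be rewritten as $\partial$ applied to the $(1,0)$-form $G_k\,dz^k$ restricted suitably, i.e.\ as $v^j(\partial_jG_k-\partial_kG_j)$ corrected by $S^rA_{kr}$. I would then verify that the terms $v^jG_{k;j}$, $G_{;k}$ and $S^rA_{kr}$ each agree for $G$ and $H$ at $(p,v)$. The first two follow from first-order contact of $\vp_kG$ and of $G$ itself: the derivative $\partial_j$ of $\vp_kG$ is a second derivative, but $\vp_k\Phi$ also vanishes on $\Sigma$ and has a critical point there, because $\Phi\ge0$ forces its Hessian to be positive semidefinite with kernel containing $T\Sigma$. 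For the third, $v^r$-contraction and $A_{kr}v^r=0$ give the term for $H$ immediately. For $H$ the identity holds because it is Kähler in the pulled-back sense.

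Step two is the curvature identity. Using \eqref{eq:2.20}, $R^i=-\bar\chi(S^i)$, I would restrict to $\Sigma$ and use that $\varphi^*G=\kappa_{\D}^2$ has Gaussian curvature $-4$, i.e.\ holomorphic sectional curvature $-4$ in the normalization \eqref{eq:2.17}. Combined with the extremality ($\Phi\ge0$, which controls the normal components, as in Abate--Patrizio), this gives $G_iR^i=-2G^2$ and also that the component of $R^i$ transverse to $v$ vanishes. Together these yield \eqref{eq:4.1}. \textbf{The main obstacle} is this transversal vanishing and the second-order control in step one. I would handle both via the nonnegative function $\Phi$, whose second-order jet along $\Sigma$ is constrained, comparing $N$ for $G$ with that of $H$ at the point.
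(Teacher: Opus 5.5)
\textbf{Step one.} Your starting point matches the paper's: $\Phi=G-q^*\kappa_{\D}^2\ge0$ vanishes on the lifted disk $\Sigma$, so $\dd\Phi=0$ along $\Sigma$. From there, however, the proposal does not reach either identity, and one step is false.

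\begin{itemize}
\item The claim that $\vp_k\Phi$ has a critical point on $\Sigma$ is wrong. Take coordinates with $z^1=q$, $\varphi(\zeta)=(\zeta,0,\dots,0)$ and $v=e_1$. Then $H=a(z^1)|w^1|^2$ with $a(\zeta)=(1-|\zeta|^2)^{-2}$, so for $n\ge2$ you get $\vp_{\bar2}\vp_2\Phi=g_{2\bar2}>0$.
\item What $\dd\Phi=0$ on $\Sigma$ actually gives is the right to differentiate first-order identities along directions tangent to $\Sigma$. That controls $v^jG_{k;j}$ only in coordinates where $(v,0)$ is tangent to $\Sigma$, i.e.\ $\varphi''(0)\in\C\varphi'(0)$.
\item It never controls $A_{kr}$. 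Here $A^H=0$, while $A^G$ is in general nonzero on the transverse block. So the term $S^rA_{kr}$ for $G$ is left open: you need $S\in\C v$ at the point, and you do not prove it. Your remark that the spray satisfies a geodesic equation along $\varphi$ points this way but is asserted, not derived.
\item For $n\ge2$, $H$ has a rank-one fiber Levi form, so $N$, $S$ and $R$ of $H$ are not defined. ``Comparing $N$ for $G$ with that of $H$'' therefore cannot be carried out.
\end{itemize}

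\textbf{Step two.} This is only a plan. The vanishing of the component of $R^i$ transverse to $v$ is the real content of $R^i=-2Gv^i$, and you leave it as the main obstacle. The Gaussian-curvature argument could at best give the radial component $G_iR^i$.

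\textbf{The missing idea.} What you are missing is precisely what removes your worry that $N$ needs all of $g^{-1}$, hence uncontrolled second fiber derivatives.
\begin{itemize}
\item In the adapted coordinates above, first-order contact holds along the whole disk $((\zeta,0),e_1)$. It gives $G_i=G_{\bar i}=\delta_{i1}a$ and $G_{;k}=\delta_{k1}\partial_\zeta a$ there.
\item Euler's identity $G_i=g_{i\bar j}\bar v^j$ turns this first-order data into the first column of $g$: $g_{i\bar1}=\delta_{i1}a$. So $g$ is block diagonal along the disk and $g^{\bar1i}=\delta_{i1}/a$. This is exactly the part of $g^{-1}$ that enters $N^i_1=g^{\bar si}G_{\bar s;1}$.
\item $G_{\bar s;1}=\delta_{s1}\partial_\zeta a$ comes from differentiating the contact identity in $\zeta$ along the disk. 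Hence $N^i_1=\delta_{i1}\,2\bar\zeta/(1-|\zeta|^2)$ on the entire disk.
\item At $\zeta=0$ this gives $S=0$, so $S^rA_{kr}=0$. Together with $\partial_1G_k=\delta_{k1}\partial_\zeta a=G_{;k}$, this yields the weak K\"ahler identity.
\item Since $N=0$ at the center, $R^i=-\partial_{\bar1}N^i_1=-2\delta_{i1}=-2Gv^i$, transverse components included.
\end{itemize}
This is the paper's proof. It uses nothing about $\Phi$ beyond first-order vanishing along $\Sigma$, plus the Euler identity and differentiation along the disk.
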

\begin{proof}
By contractivity under \(\varphi\) and \(q\), and
\(q\circ\varphi=\id_{\D}\), we have
\[
\kappa_D\bigl(\varphi(\zeta);\xi\varphi'(\zeta)\bigr)
=\frac{|\xi|}{1-|\zeta|^2}.
\]
Since $q\circ\varphi=\mathrm{id}_{\mathbb D}$, we get $dq_p\ne0$.
Choose local holomorphic coordinates near $p$ with $z^1=q$.
After shrinking the coordinate neighborhood, subtract the transverse
components of $\varphi(z^1)$ from the remaining coordinates. Then
$\varphi(\zeta)=(\zeta,0,\ldots,0)$ for $\zeta$ near $0$.
By homogeneity, it suffices to prove the assertions for $v=e_1$.

Set \(G=\kappa_D^2\) and
\[
P(z,w)=\frac{|w^1|^2}{(1-|z^1|^2)^2}.
\]
It follows from the contractivity under \(q\) that \(G\ge P\), with equality at
\(((\zeta,0),e_1)\). Thus \(G-P\) has a local minimum there, so its first
partial derivatives in all real coordinate directions vanish. Writing
\(a(\zeta)=(1-|\zeta|^2)^{-2}\), we obtain along the lifted disk
\[
G=a,\quad G_i=G_{\bar i}=\delta_{i1}a,\quad
G_{;k}=\delta_{k1}\partial_\zeta a.
\]
From the Euler identity \(G_i=g_{i\bar j}\bar v^j\), we have
\(g_{i\bar1}=g_{1\bar i}=\delta_{i1}a\) at \(v=e_1\).

Using \(N^i_1=g^{\bar s i}\partial_1G_{\bar s}\) and differentiating
\(G_{\bar s}=\delta_{s1}a\) along the disk, we get
\[
N^i_1=\delta_{i1}\frac{\partial_\zeta a}{a}
=\delta_{i1}\frac{2\bar\zeta}{1-|\zeta|^2}.
\]
At $((0,\ldots,0),e_1)$, we have $N^i_1=0$ for every $i$.
Since $S^i=N^i_jv^j=N^i_1$ at $v=e_1$, we also have $S^i=0$.
Moreover, $\partial_1G_k=0$ and $G_{;k}=0$ there.
 Hence the weak K\"ahler condition
\[
v^jG_{k;j}-G_{;k}-S^rA_{kr}=0
\]
holds there.

Finally, since \(N^i_1=0\) at \((0,e_1)\),
\[
\delta_{\bar1}N^i_1=\partial_{\bar1}N^i_1=2\delta_{i1}.
\]
By \eqref{eq:2.15}, \(R^i=-2\delta_{i1}=-2Gv^i\) at \((0,e_1)\).
Both conclusions at \((p,v)\) follow by coordinate invariance
and homogeneity.
\end{proof}

\begin{proof}[Proof of Theorem~\ref{thm:convex}]
By Lempert's theory \cite{Lempert81,Lempert82}, the Kobayashi metric
\(\kappa_D\) is smooth and strongly pseudoconvex, and there is a holomorphic
retract through every point and nonzero direction. By Lemma~\ref{lem:retract},
\(\kappa_D\) is weakly K\"ahler--Finsler and satisfies \eqref{eq:4.1}
everywhere.

We now begin to prove (i) \(\Rightarrow\) (ii).   Assume (i). Applying Theorem~\ref{thm:main} to \(F=\kappa_D\) with \(c=-2\), we obtain
 that \(\kappa_D\) is Hermitian, proving (ii).

(ii) \(\Rightarrow\) (iii).  We assume (ii), and write $G=h_{a\bar b}(z)v^a\bar v^b$.
Then $A=0$. From the weak K\"ahler condition
\eqref{eq:2.13}, it follows that
\[
\bigl(\partial_jh_{k\bar\ell}
     -\partial_kh_{j\bar\ell}\bigr)v^j\bar v^\ell=0
\qquad\text{for every }v.
\]
Differentiating this polynomial identity in $v^j$ and $\bar v^\ell$, we get
$\partial_jh_{k\bar\ell}=\partial_kh_{j\bar\ell}$.
Thus $h$ is K\"ahler. The radial identity \eqref{eq:4.1} gives
holomorphic sectional curvature $-4$. Moreover,
\(\kappa_D\) is complete, and \(D\) is simply connected because it is
convex. By means of the classical classification of complete simply connected
K\"ahler manifolds of constant holomorphic sectional curvature, we obtain
that \((D,\kappa_D)\) is biholomorphically isometric to the unit ball,
proving (iii).

(iii) \(\Rightarrow\) (i).  Assume (iii), and let \(f:D\to\B^n\) be biholomorphic. The
Kobayashi metric is invariant under biholomorphisms, and on the ball it has
the explicit formula
\[
\kappa_{\B^n}(z;v)
=\frac{\sqrt{(1-\|z\|^2)\|v\|^2+|\langle v,z\rangle|^2}}{1-\|z\|^2}.
\]
This is a Hermitian K\"ahler metric. Hence \(\kappa_D=f^*\kappa_{\B^n}\)
is K\"ahler--Finsler, proving (i).
\end{proof}

\begin{corollary}\label{cor:scale}
Let \(F\) be a smooth strongly pseudoconvex complex Finsler metric on
a domain \(D\). Suppose that every nonzero tangent direction admits
a retract as in Lemma~\ref{lem:retract}, and that
\[
\varphi^*F=a_0\kappa_{\D}
\]
for every selected disk, with the same constant \(a_0>0\).
Then \(F=a_0\kappa_D\), \(F\) is weakly K\"ahler--Finsler, and
\[
R^i=-\frac{2}{a_0^2}Gv^i,\qquad G=F^2.
\]
If \(F\) is K\"ahler--Finsler, then it is Hermitian K\"ahler with
holomorphic sectional curvature \(-4/a_0^2\).
\end{corollary}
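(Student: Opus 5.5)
The plan is to first identify $F$ with $a_0\kappa_D$, and then reuse the argument of Lemma~\ref{lem:retract} with $\kappa_{\D}$ replaced by $a_0\kappa_{\D}$.

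\textbf{Step 1: $F=a_0\kappa_D$.} Fix $(p,v)$ with $v\neq0$. Choose the retract $\varphi,q$ given by the hypothesis, and write $v=\xi\varphi'(0)$. From $\varphi^*F=a_0\kappa_{\D}$ we get $F(p;v)=a_0|\xi|$. Since $q\circ\varphi=\id_{\D}$ and the Kobayashi metric is contracted by holomorphic maps,
\[
a_0\kappa_D(p;v)\ge a_0\kappa_{\D}(0;dq_p v)=a_0|\xi|
\qquad\text{and}\qquad
a_0\kappa_D(p;v)\le a_0\kappa_{\D}(0;\xi)=a_0|\xi|.
\]
Hence $F=a_0\kappa_D$ pointwise, and in particular $\kappa_D$ is smooth and strongly pseudoconvex.

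\textbf{Step 2: weak K\"ahler condition and curvature.} Lemma~\ref{lem:retract} applies to $\kappa_D$ at every $(p,v)$. It gives the weak K\"ahler identity \eqref{eq:2.13} and $R^i_{\kappa}=-2G_\kappa v^i$, where $G_\kappa=\kappa_D^2$. Now let $G=a_0^2G_\kappa$ and look at how each quantity scales:
\begin{itemize}
\item $g_{i\bar j}$, $A$ and $G_{;k}$ all scale by $a_0^2$;
\item $N^i_j=g^{\bar a i}G_{\bar a;j}$ is invariant, and so are $S$, $\delta_j$ and $R^i$.
\end{itemize}
Identity \eqref{eq:2.13} is linear in $G$ once $S$ is fixed, so it persists, and $F$ is weakly K\"ahler--Finsler. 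For the curvature,
\[
R^i=-2G_\kappa v^i=-\frac{2}{a_0^2}\,G\,v^i .
\]
Alternatively, one can redo the proof of Lemma~\ref{lem:retract} directly with $P=a_0^2|w^1|^2/(1-|z^1|^2)^2$. The factor $a_0^2$ cancels in $N^i_1=\partial_\zeta a/a$, and $R^i=-2\delta_{i1}=-(2/a_0^2)Gv^i$ at $v=e_1$, because $G=a_0^2$ there.

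\textbf{Step 3: the K\"ahler--Finsler case.} If $F$ is K\"ahler--Finsler, apply Theorem~\ref{thm:main} with $c=-2/a_0^2\neq0$. It shows that $F$ is induced by a Hermitian K\"ahler metric of holomorphic sectional curvature $2c=-4/a_0^2$.

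The only delicate point is Step 1. There one must make sure the retract's disk is parametrized so that $\varphi^*F=a_0\kappa_{\D}$ matches the normalization $\kappa_{\D}(\zeta;\xi)=|\xi|/(1-|\zeta|^2)$; the hypothesis gives exactly this. Everything else is bookkeeping of how quantities scale under $G\mapsto a_0^2G$.
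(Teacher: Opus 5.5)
Your proposal is correct and follows the paper's proof essentially step for step. The same two-sided contractivity squeeze through the retract gives $F=a_0\kappa_D$. Lemma~\ref{lem:retract} is then applied to $\kappa_D=F/a_0$, and its conclusions transfer to $F$ because $N^i_j$, $\delta_j$ and $R^i$ are invariant under constant rescaling. Finally, Theorem~\ref{thm:main} is invoked with $c=-2/a_0^2$.

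Your explicit scaling bookkeeping, and the alternative direct recomputation with $P=a_0^2|w^1|^2/(1-|z^1|^2)^2$, just spell out the paper's one-line remark. That remark is that multiplying by a positive constant leaves the Chern--Finsler connection and the weak K\"ahler--Finsler condition unchanged.
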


\begin{proof}
Fix \(p\in D\) and \(v\ne0\). Choose a retract
\(\varphi:\D\to D\), \(q:D\to\D\) such that
\(\varphi(0)=p\) and \(v=\lambda\varphi'(0)\).
Since \(q\circ\varphi=\operatorname{id}_{\D}\), it follows from the contractivity that
\[
|\lambda|
=\kappa_{\D}(0;dq_p(v))
\le\kappa_D(p;v)
\le\kappa_{\D}(0;\lambda)
=|\lambda|.
\]
Thus \(\kappa_D(p;v)=|\lambda|\). From the assumption
\(\varphi^*F=a_0\kappa_{\D}\), we have
\(F(p;v)=a_0|\lambda|\), so \(F=a_0\kappa_D\).

In particular, \(\kappa_D=F/a_0\) is smooth and strongly
pseudoconvex. By applying Lemma~\ref{lem:retract} to
\(\kappa_D\), it follows that \(\kappa_D\) is weakly K\"ahler--Finsler and satisfies
\[
R_{\kappa_D}^i=-2\kappa_D^2v^i.
\]
Multiplication by a positive constant leaves the Chern--Finsler
connection and the weakly K\"ahler--Finsler condition unchanged.
Hence \(F\) is weakly K\"ahler--Finsler and
\[
R_F^i=R_{\kappa_D}^i
=-2\kappa_D^2v^i
=-\frac{2}{a_0^2}Gv^i.
\]

Now assume that \(F\) is K\"ahler--Finsler. Since
\(\chi=v^i\delta_i\), from the preceding identity, it follows that
\[
\Omega(\chi,\bar\chi)\chi
=R_F^i\delta_i
=-\frac{2}{a_0^2}G\chi.
\]
Thus \(F\) satisfies the hypothesis of Theorem~\ref{thm:main}
with \(c=-2/a_0^2\). It follows that \(F\) is Hermitian
K\"ahler with holomorphic sectional curvature \(2c=-4/a_0^2\).
\end{proof}

\section{Proof of Theorem  1.4}
We now prove Theorem~\ref{thm:boundary}. The main point is to recover a
Hermitian K\"ahler metric from the Kobayashi metric near the whole boundary.

Put \(\delta(z)=\operatorname{dist}(z,\partial D)\).
For \(z\) sufficiently close to \(\partial D\), let \(\pi(z)\) be the nearest
boundary point and write
\[
v=v_T+v_N,\qquad v_T\in T^\C_{\pi(z)}\partial D,\quad
v_N\perp T^\C_{\pi(z)}\partial D.
\]
By Bracci--Forn{\ae}ss--Wold  \cite{BFW}, there exist
\(\epsilon,\eta>0\) such that every nonzero vector satisfying
\[
0<\delta(z)<\epsilon,\qquad |v_N|<\eta|v_T|
\]
is tangent to a holomorphic disk, which is a holomorphic retract of \(D\).
Set
\begin{equation}
C=\{(z,v):0<\delta(z)<\epsilon,\ |v_N|<\eta|v_T|\}.
\label{eq:5.1}
\end{equation}
Then every fiber \(C_z\) is a nonempty connected open cone.

Assume condition \textup{(ii)} of Theorem~\ref{thm:boundary}.
By decreasing \(\epsilon\), we may assume that \(\kappa_D\) is a
smooth strongly pseudoconvex K\"ahler--Finsler on \(V_\epsilon\).
For every \((z,v)\in C\), from Lemma~\ref{lem:retract}, it follows that
\(R^i=-2Gv^i\).
Applying Theorem~\ref{theo:cone} with \(G=\kappa_D^2\) and \(c=-2\),
we obtain a smooth Hermitian K\"ahler metric \(h\) on \(V_\epsilon\)
such that
\begin{equation}
G(z,v)=h_z(v,v)\qquad((z,v)\in C)\label{eq:5.2}
\end{equation}
and
\begin{equation}
K_h\equiv-4.\label{eq:5.3}
\end{equation}
The equality \eqref{eq:5.2} is needed only on the cone $C$; the curvature
identity \eqref{eq:5.3} holds in every nonzero direction.

A metric defined outside a compact set is called
\emph{complete at infinity} if it agrees, outside a possibly
larger compact set, with a complete smooth positive definite
metric on the whole manifold.

\begin{lemma}\label{lem:infinity}
Let \(h\) be a smooth positive definite Hermitian metric on a boundary collar
of \(D\) satisfying \eqref{eq:5.2} for some fixed \(\eta>0\). Then \(h\) is
complete at infinity.
\end{lemma}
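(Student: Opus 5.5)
The plan is to compare \(h\) with the Kobayashi metric \(\kappa_D\) near \(\partial D\), using the standard boundary estimates for strongly pseudoconvex domains and the identity \eqref{eq:5.2} on the cone \(C\). After shrinking the collar, I would first extend \(h\) to a smooth positive definite Hermitian metric \(\tilde h\) on all of \(D\). To do this, fix a smooth cutoff that equals \(1\) on \(\{\delta<\epsilon/2\}\) and vanishes on \(\{\delta>3\epsilon/4\}\). Then glue \(h\) to the Euclidean metric, taking a convex combination on the overlap; a convex combination of positive definite Hermitian forms is positive definite. So \(\tilde h\) agrees with \(h\) outside a compact set. It remains to show \(\tilde h\) is complete. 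Since \(D\) is relatively compact, completeness is equivalent to every \(\tilde h\)-rectifiable path tending to \(\partial D\) having infinite length. It therefore suffices to find \(c_0>0\) with
\[
h_z(v,v)\ \ge\ c_0\,\frac{|v|^2}{\delta(z)}\qquad(0<\delta(z)<\epsilon').
\]
Then \(\sqrt{h(\gamma',\gamma')}\ge c_0^{1/2}|(\delta\circ\gamma)'|/\delta^{1/2}\) along any path, which is the derivative of \(2\sqrt{\delta}\). This bound alone gives only finite length. So I would instead aim for the sharper lower bound \(h_z(v,v)\ge c_0|v_N|^2/\delta^2\) in the normal direction together with \(h_z(v,v)\ge c_0|v|^2/\delta\) in all directions.

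The key steps use Graham's estimates for \(\kappa_D\) on strongly pseudoconvex domains: there is \(c_1>0\) such that
\[
\kappa_D(z;v)^2\ \ge\ c_1\Bigl(\frac{|v_N|^2}{\delta(z)^2}+\frac{|v_T|^2}{\delta(z)}\Bigr)
\]
near \(\partial D\). On \(C\), where \(|v_N|<\eta|v_T|\), \eqref{eq:5.2} gives \(h_z(v,v)=\kappa_D(z;v)^2\). This holds in particular for \(v=v_T+w\) with \(v_T\) complex tangent and \(|w|<\eta|v_T|\) arbitrary, including \(w\) normal. The difficulty is that \(h_z\) is a quadratic form, so I would use polarization. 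For a fixed complex tangent unit vector \(u\) and the unit normal \(\nu\), the vectors \(u\) and \(u+t\nu\) with \(|t|<\eta\) lie in \(C_z\). The values of \(h_z\) on these vectors determine \(h_z(u,u)\), \(h_z(u,\nu)\), and \(h_z(\nu,\nu)\). Indeed, \(h_z(u+t\nu,u+t\nu)=h(u,u)+2\operatorname{Re}(\bar t\,h(u,\nu))+|t|^2h(\nu,\nu)\), so taking \(t\in\{\pm\eta/2,\pm i\eta/2\}\) and averaging isolates \(h(\nu,\nu)\) with a factor \(4/\eta^2\). Using the upper bound \(\kappa_D^2\le c_2(|v_N|^2/\delta^2+|v_T|^2/\delta)\) on \(C\) does not directly bound \(h(\nu,\nu)\) from below, because the averaging involves differences. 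So instead I would argue via positivity. Take \(v=u+t\nu\) with \(|t|=\eta/2\) and the phase of \(t\) chosen to make \(\operatorname{Re}(\bar t\,h(u,\nu))\le0\). Then
\[
\tfrac{\eta^2}{4}h(\nu,\nu)\ \ge\ h(v,v)-h(u,u)\ \ge\ c_1\frac{\eta^2}{4\delta^2}-c_2\frac1\delta .
\]
For \(\delta\) small, this gives \(h(\nu,\nu)\ge c_1/(2\delta^2)\).

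The main obstacle is promoting the bounds on \(h(\nu,\nu)\) and \(h(u,u)\) to a lower bound for \(h(v,v)\) on all \(v\), with the cross term controlled. I would handle this with Cauchy--Schwarz and the positive definiteness of \(h\): the cone contains an open set of directions around each \(u\), so the restriction of \(h_z\) to the cone controls the whole form. Concretely, \(h_z\) restricted to the complex line spanned by \(u+t\nu\) and \(u\) dominates \(c_1\) times the Graham model form on the set of \(v\in C_z\). Two positive definite Hermitian forms with \(Q_1\ge Q_2\) on an open cone need not satisfy \(Q_1\ge Q_2\) everywhere. However, I would use the sharper fact that \(h\) equals \(\kappa_D^2\) there: the model form \(\rho_z(v)=|v_N|^2/\delta^2+|v_T|^2/\delta\) and \(h_z\) are comparable on \(C_z\) with two-sided constants, and comparability on an open cone is inherited by the forms up to constants depending only on \(\eta\), by homogeneity and compactness of the set of normalized pairs. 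Granting this, \(h_z\ge c_0\rho_z\) near \(\partial D\). Then every path to \(\partial D\) has \(\tilde h\)-length at least \(c_0^{1/2}\int |(\delta\circ\gamma)'|/\delta=\infty\), so \(\tilde h\) is complete and \(h\) is complete at infinity.
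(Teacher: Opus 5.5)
The gap is the step you explicitly grant. Your bound $h_z(\nu,\nu)\ge c_1/(2\delta^2)$ is correct and is essentially the paper's first step; the paper gets it from the triangle inequality for $\sqrt{h_z}$ applied to $e$ and $e+\tfrac{\eta}{2}\nu$, without choosing a phase. The trouble comes afterwards. You claim that two positive definite Hermitian forms comparable on $\{|v_N|<\eta|v_T|\}$ are comparable everywhere, with constants depending only on $\eta$. This is false. Take $Q_1(v)=|v_T|^2+\varepsilon|v_N|^2$ and $Q_2(v)=|v|^2$. For every $\varepsilon\in(0,1]$ they satisfy $(1+\eta^2)^{-1}Q_2\le Q_1\le Q_2$ on that cone, yet $Q_1(\nu)/Q_2(\nu)=\varepsilon$. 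Your compactness argument fails because forms obeying the cone bounds can degenerate, here to $|v_T|^2$. So the set of normalized forms is not compact inside the positive definite ones. Cauchy--Schwarz does not rescue it either. For $v=au+b\nu$ it only gives $h_z(v,v)\ge\bigl(|a|\sqrt{h_z(u,u)}-|b|\sqrt{h_z(\nu,\nu)}\bigr)^2$, which can vanish unless you use, quantitatively, that the two terms live at different scales.

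What is missing is exactly that scale-separation argument for vectors outside the cone, which is how the paper concludes. If $v\notin C_z$, then $|v_T|\le|v_N|/\eta$. Nonzero complex-tangential vectors lie in $C_z$, so $\sqrt{h_z(v_T,v_T)}=\kappa_D(z;v_T)\le\sqrt{c_2}\,|v_T|/\sqrt{\delta}$. Combined with your normal bound, the triangle inequality gives
\[
\sqrt{h_z(v,v)}\ \ge\ \sqrt{h_z(v_N,v_N)}-\sqrt{h_z(v_T,v_T)}\ \ge\ \Bigl(\sqrt{c_1/2}-\frac{\sqrt{c_2\,\delta}}{\eta}\Bigr)\frac{|v_N|}{\delta}.
\]
For $v\in C_z$ you have directly $\sqrt{h_z(v,v)}=\kappa_D(z;v)\ge\sqrt{c_1}\,|v_N|/\delta$. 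On a thinner collar this yields $\sqrt{h_z(v,v)}\ge b_1|v_N|/\delta$ for all $v$, which is all the logarithmic length estimate needs. The extra bound $h_z(v,v)\gtrsim|v|^2/\delta$ you also aimed for is not needed. The rest of your plan agrees with the paper: the cutoff extension, the reduction to divergence of lengths of paths tending to $\partial D$, and $|(\delta\circ\gamma)'|\lesssim|\gamma'_N|$.
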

\begin{proof}
By the boundary estimates of Balogh--Bonk \cite{BB}, after
shrinking the collar $V_{\epsilon}$, there are constants \(b,B>0\) such that
\begin{equation}
\kappa_D(z;w)\ge b\frac{|w_N|}{\delta(z)},\qquad
\kappa_D(z;t)\le B\frac{|t|}{\sqrt{\delta(z)}},
\label{eq:5.4}
\end{equation}
where \(t\) is complex tangential. Write \(\|v\|_{h,z}=\sqrt{h_z(v,v)}\).

Set \(a=\eta/2\). At a fixed \(z\), let \(\nu\) and \(e\) be Euclidean unit
complex-normal and complex-tangential vectors, respectively. Since both
\(e\) and \(e+a\nu\) belong to \(C_z\), from \eqref{eq:5.2} and \eqref{eq:5.4},
it follows that
\[
\begin{aligned}
a\|\nu\|_{h,z}
&\ge\|e+a\nu\|_{h,z}-\|e\|_{h,z}\\
&=\kappa_D(z;e+a\nu)-\kappa_D(z;e)\\
&\ge\frac{ba}{\delta(z)}-\frac B{\sqrt{\delta(z)}}.
\end{aligned}
\]
After shrinking the collar  $V_{\epsilon}$, we have
\begin{equation}
\|\nu\|_{h,z}\ge\frac b{2\delta(z)}.\label{eq:5.5}
\end{equation}
If \(v\notin C_z\), then \(|v_T|\le|v_N|/\eta\).
For a nonzero complex-tangential vector \(v_T\), it follows from \eqref{eq:5.2} that
\(\|v_T\|_{h,z}=\kappa_D(z;v_T)\), and the same equality holds trivially
when \(v_T=0\). Using \eqref{eq:5.5}, \eqref{eq:5.4}, and the triangle
inequality, we obtain
\[
\begin{aligned}
\|v\|_{h,z}
&\ge\|v_N\|_{h,z}-\|v_T\|_{h,z}\\
&\ge\frac{b|v_N|}{2\delta(z)}-\frac{B|v_T|}{\sqrt{\delta(z)}}\\
&\ge\left(\frac b2-\frac B\eta\sqrt{\delta(z)}\right)
\frac{|v_N|}{\delta(z)}.
\end{aligned}
\]
If \(v\in C_z\), then it follows from \eqref{eq:5.2} and \eqref{eq:5.4} that
\[
\|v\|_{h,z}=\kappa_D(z;v)\ge b\frac{|v_N|}{\delta(z)}.
\]
Thus, after another uniform shrinking of the collar $V_{\epsilon}$, there is a constant
\(b_1>0\) such that
\begin{equation}
\|v\|_{h,z}\ge b_1\frac{|v_N|}{\delta(z)}
\qquad(v\in T^{1,0}_zD).\label{eq:5.6}
\end{equation}
Let \(\gamma\) be a piecewise \(C^1\) curve in the collar $V_{\epsilon}$. The differential
of the distance function is controlled by the complex-normal component of
the velocity. Consequently, with a constant \(b_2>0\) accounting for the
convention used to identify real and \((1,0)\) tangent vectors, it follows from
\eqref{eq:5.6} that
\[
\begin{aligned}
L_h(\gamma|_{[s,t]})
&\ge b_2\int_s^t\frac{|(\delta\circ\gamma)'(r)|}
{\delta(\gamma(r))}\,\dd r\\
&\ge b_2\left|\log\frac{\delta(\gamma(t))}{\delta(\gamma(s))}\right|.
\end{aligned}
\]
Hence every curve tending to \(\partial D\) has infinite \(h\)-length.
Patching \(h\) with an arbitrary smooth positive definite Hermitian metric in
a compact interior region, we obtain a complete metric in \(D\). Therefore,
\(h\) is complete at infinity.
\end{proof}

\begin{proof}[Proof of Theorem~\ref{thm:boundary}]
The implications \textup{(i)} \(\Rightarrow\) \textup{(ii)} and
\textup{(iii)} \(\Rightarrow\) \textup{(i)} are immediate.
If \textup{(iv)} holds, let \(P:\B^n\to D\) be a holomorphic
covering map. Since
\[
P^*\kappa_D=\kappa_{\B^n},
\]
and \(\kappa_{\B^n}\) is Hermitian K\"ahler with holomorphic
sectional curvature \(-4\), condition \textup{(iii)} follows.

It remains to prove \textup{(ii)} \(\Rightarrow\) \textup{(iv)}.
From Lemma~\ref{lem:retract}, Theorem~\ref{theo:cone} and the preceding
construction, we obtain a Hermitian K\"ahler metric \(h\) on a boundary collar
satisfying \(K_h\equiv-4\). By Lemma~\ref{lem:infinity}, \(h\) is complete
at infinity.

The domain $D$ is Stein. Since $n\ge2$, it has one end
(\cite{GZ}). We may therefore choose a compact set
$K\Subset D$ such that $D\setminus K$ is connected and contained
in the collar where $h$ is defined. The metric $h$ is K\"ahler
with constant holomorphic sectional curvature $-4$, so it is
Hermitian locally symmetric. It follows from Lemma~\ref{lem:infinity} that $h$
is complete at infinity. Thus all the hypotheses of Theorem 1.1 in 
\cite{GZ} are satisfied. The metric $h|_{D\setminus K}$
extends to a complete Hermitian locally symmetric metric
$\widetilde h$ on $D$, which is K\"ahler (\cite{GZ}). Moreover,
\[
K_{\widetilde h}\equiv-4\quad\text{on }D\setminus K.
\]
Hence, applying  Corollary~1.2 in   \cite{GZ}, we have \(\widehat D\simeq\B^n\).
Therefore, the four conditions are equivalent:
\[
\text{(i)}\Rightarrow\text{(ii)}\Rightarrow\text{(iv)}
\Rightarrow\text{(iii)}\Rightarrow\text{(i)}.
\]
If \(D\) is simply connected, then the universal covering map is a
biholomorphism, and hence \(D\simeq\B^n\).
\end{proof}

\section{Examples of smooth weakly K\"ahler--Finsler metrics}
In this section, we give two examples of smooth weakly K\"ahler--Finsler metrics that are not
K\"ahler--Finsler.

\begin{proposition}\label{prop:balanced}
The domain
\[
D=\{(z,w)\in\C^2:
|z|^2+|w|^2+|z|^2|w|^2<1\}
\]
has smooth strongly convex boundary. Its Kobayashi metric is smooth,
strongly pseudoconvex and weakly K\"ahler--Finsler, but is not
K\"ahler--Finsler.
\end{proposition}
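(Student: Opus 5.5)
The proof has three steps: check the boundary, apply Theorem~\ref{thm:convex}, and show the domain is not a ball.

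\textbf{Step 1: the boundary is smooth and strongly convex.} Write $\rho(z,w)=|z|^2+|w|^2+|z|^2|w|^2-1$. On $\{\rho=0\}$ the gradient of $\rho$ is
\[
\bigl(2z(1+|w|^2),\,2w(1+|z|^2)\bigr),
\]
which vanishes only at the origin. The origin is not on the boundary, so $\partial D$ is smooth. $D$ is bounded because $|z|^2+|w|^2<1$ on $D$. For strong convexity I would work with the real Hessian of $\rho$. In polar variables $r=|z|$, $s=|w|$, the function is $f(r,s)=r^2+s^2+r^2s^2$. Its Hessian in the radial directions is
\[
\begin{pmatrix}2+2s^2 & 4rs\\ 4rs & 2+2r^2\end{pmatrix},
\]
with determinant $4(1+r^2)(1+s^2)-16r^2s^2$. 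On the boundary $r^2+s^2\le1$, hence $r^2s^2\le 1/4$, and the determinant is positive there. The angular directions contribute $f_r/r>0$ and $f_s/s>0$, up to the usual care at $r=0$ or $s=0$. So the full real Hessian is positive definite near $\partial D$, which gives strong convexity. This is routine.

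\textbf{Step 2: the positive properties.} By Theorem~\ref{thm:convex}, $\kappa_D$ is smooth, strongly pseudoconvex and weakly K\"ahler--Finsler. The same theorem shows that $\kappa_D$ is K\"ahler--Finsler if and only if $D\simeq\B^2$. So it remains to show that $D$ is not biholomorphic to $\B^2$.

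\textbf{Step 3: $D$ is not a ball.} $D$ is a bounded Reinhardt domain containing the origin, and it is complete and balanced. If $f:D\to\B^2$ were a biholomorphism, composing with an automorphism of the ball gives $f(0)=0$. By Cartan's theorem for circular domains, a biholomorphism between bounded circular domains containing the origin that fixes the origin is linear. Hence $D=L(\B^2)$ for some $L\in GL_2(\C)$. Then $\rho+1$ restricted to $\partial D$ would coincide with the Hermitian form $|L^{-1}(z,w)|^2$ on $\partial D$. Both functions define $D$ by the inequality $<1$; the Minkowski functional of $D$ is determined by $D$, so $\partial D$ would be the unit sphere of a Hermitian norm.

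To rule this out, use the Reinhardt symmetry. $D$ is invariant under the torus action $(z,w)\mapsto(e^{i\theta}z,e^{i\phi}w)$, so the Minkowski norm $\mu$ of $D$ is torus invariant. If $\mu^2=|L^{-1}(\cdot)|^2$ is Hermitian and torus invariant, averaging over the torus shows it has the diagonal form $\alpha|z|^2+\beta|w|^2$. Then $\partial D$ would be the ellipsoid $\alpha|z|^2+\beta|w|^2=1$. The points $(1,0)$ and $(0,1)$ lie on $\partial D$, which forces $\alpha=\beta=1$. But the point $(1/\sqrt2)(t,t)$ lies on $\partial D$ when $t^2+t^4/4=1$, that is $t^2=2\sqrt2-2\ne1$, a contradiction.

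I expect the only step needing care to be the invocation of Cartan's linearity theorem. As a fallback that avoids it, one can use the Kobayashi metric directly. Since $D$ is balanced and convex, $\kappa_D(0;\cdot)$ is the Minkowski norm of $D$. If $\kappa_D$ were Hermitian, as in (ii) of Theorem~\ref{thm:convex}, then $\kappa_D(0;\cdot)^2$ would be a Hermitian form, and the same torus-averaging contradiction applies.
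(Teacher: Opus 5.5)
Your argument is correct, and Steps 1--2 match the paper, but you rule out the K\"ahler--Finsler property by a different route.

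\textbf{The paper's route.} It works with condition (ii) of Theorem~\ref{thm:convex}. Since $D$ is balanced and convex, $\kappa_D(0;\cdot)$ is its Minkowski functional, which the paper computes in closed form as $Q=\tfrac12\bigl(s+\sqrt{s^2+4|\alpha|^2|\beta|^2}\bigr)$. The values $Q(e_1)=Q(e_2)=1$ and $Q(e_1\pm e_2)=1+\sqrt2$ violate the parallelogram law, so $\kappa_D$ is not Hermitian.

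\textbf{Your main route.} You negate (iii) directly.
\begin{enumerate}
\item[(a)] Cartan's linearity theorem makes $D$ an ellipsoid. It applies because $D$ and $\B^2$ are bounded circular domains containing $0$, once you normalize $f(0)=0$ by a ball automorphism.
\item[(b)] Torus invariance forces the ellipsoid to be diagonal.
\item[(c)] The boundary points $(1,0)$, $(0,1)$ and $(t/\sqrt2)(1,1)$ with $t^2=2\sqrt2-2$ give the contradiction.
\end{enumerate}
This makes you independent of the identification of $\kappa_D(0;\cdot)$ with the Minkowski functional. The price is invoking an external rigidity theorem, whereas the paper's check is elementary and self-contained.

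\textbf{Your fallback.} This is essentially the paper's argument. Torus averaging plus one boundary point replaces the explicit formula and the parallelogram test. The numbers agree: $2/t^2=1+\sqrt2=Q(e_1+e_2)$.

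\textbf{A small point in Step 1.} You assert positivity of the Hessian only near $\partial D$. However, convexity of $D$ itself is also used, both in Lempert's theory and in your fallback. Your determinant bound needs only $r^2+s^2\le1$. So the Hessian is positive definite on the closed unit ball containing $\overline D$, and convexity of $D$ follows as in the paper: $D$ is a sublevel set of a convex function on a convex set. Alternatively, it follows from Hadamard's theorem.
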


\begin{proof}
Set \(f(z,w)=|z|^2+|w|^2+|z|^2|w|^2\).
A direct calculation of its real Hessian gives
\[
\nabla^2f\ge (2-4|z||w|)I.
\]
Thus \(f\) is strictly convex on the Euclidean unit ball, which
contains \(D\), and hence \(D\) is convex. On \(\overline D\),
\[
2|z||w|+|z|^2|w|^2\le1,
\]
so \(|z||w|\le\sqrt2-1\) and
\[
\nabla^2f\ge(6-4\sqrt2)I>0.
\]
Moreover, \(df\ne0\) on \(f=1\), since the derivative of \(f\) in
the radial direction is positive there. Therefore \(D\) has smooth
strongly convex boundary.

Since \(D\) is balanced and convex, \(\kappa_D(0;\cdot)\) equals
its Minkowski functional. Solving the boundary equation, we get
\[
Q(\alpha,\beta):=\kappa_D(0;\alpha,\beta)^2
=\frac{s+\sqrt{s^2+4|\alpha|^2|\beta|^2}}2,
\qquad s=|\alpha|^2+|\beta|^2.
\]
For the standard basis vectors \(e_1,e_2\), we have
\[
Q(e_1)=Q(e_2)=1,\qquad
Q(e_1+e_2)=Q(e_1-e_2)=1+\sqrt2.
\]
Consequently,
\[
Q(e_1+e_2)+Q(e_1-e_2)
\ne 2Q(e_1)+2Q(e_2).
\]
Thus \(Q\) is not a Hermitian quadratic form, and \(\kappa_D\)
is not Hermitian. It follows from Theorem~\ref{thm:convex} that
\(\kappa_D\) is smooth, strongly pseudoconvex and weakly
K\"ahler--Finsler, but is not K\"ahler--Finsler.
\end{proof}

\begin{proposition}\label{prop:shear}
The domain
\[
D_{\mathrm{sh}}
=\{(z,w)\in\C^2:
|z-2w^2|^2+|w|^2+|z-2w^2|^2|w|^2<1\}
\]
is bounded and nonconvex, with real-analytic strongly pseudoconvex
boundary. Its Kobayashi metric is smooth, strongly pseudoconvex
and weakly K\"ahler--Finsler, but is not K\"ahler--Finsler.
\end{proposition}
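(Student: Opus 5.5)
The domain is the image of the domain $D$ of Proposition~\ref{prop:balanced} under the polynomial automorphism
\[
\Phi(z,w)=(z+2w^2,\,w),\qquad \Phi^{-1}(z,w)=(z-2w^2,\,w),
\]
that is, $D_{\mathrm{sh}}=\Phi(D)$. Since $\Phi$ is a biholomorphism of $\C^2$, the Kobayashi metric transforms as $\kappa_{D_{\mathrm{sh}}}=(\Phi^{-1})^*\kappa_D$. Smoothness, strong pseudoconvexity of $G=\kappa^2$, and each of the torsion conditions (weakly K\"ahler--Finsler, K\"ahler--Finsler) are invariant under holomorphic coordinate changes. Indeed, \eqref{eq:2.11} is coordinate invariant, and $v^jT^i_{jk}$ is a tensor. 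So these properties of $\kappa_{D_{\mathrm{sh}}}$ follow from Proposition~\ref{prop:balanced}: $\kappa_{D_{\mathrm{sh}}}$ is smooth, strongly pseudoconvex and weakly K\"ahler--Finsler, but not K\"ahler--Finsler.

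Boundedness is immediate. On $D_{\mathrm{sh}}$ we have $|w|<1$ and $|z-2w^2|<1$, hence $|z|<3$.

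For the boundary, set $\rho=f\circ\Phi^{-1}-1$ with $f$ as in Proposition~\ref{prop:balanced}. Then $\rho$ is a real polynomial, so real-analytic. Its differential is nonzero on $\{\rho=0\}$ because $df\neq0$ on $\{f=1\}$ and $\Phi^{-1}$ is a diffeomorphism. Strong pseudoconvexity of a boundary is a biholomorphic invariant: the Levi form of $\rho$ is the pullback of that of $f$, and $f$ is strictly plurisubharmonic since its real Hessian is positive definite. Hence $\partial D_{\mathrm{sh}}$ is a real-analytic strongly pseudoconvex hypersurface.

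The only genuinely new step, and the one I expect to need care, is showing nonconvexity. I would exhibit two points of $\overline{D_{\mathrm{sh}}}$ whose midpoint lies outside. Take $w=\pm t$ with $t$ real and $z=2t^2$, so that $z-2w^2=0$. The defining inequality then reduces to $t^2<1$, so $(2t^2,\pm t)\in D_{\mathrm{sh}}$ for $t^2<1$. The midpoint is $(2t^2,0)$, and for it the defining expression equals $4t^4$. Choosing $t^2\in(1/2,1)$ gives $4t^4>1$, so the midpoint is not in $D_{\mathrm{sh}}$ (for instance $t^2=0.6$ gives $4t^4=1.44$). Hence $D_{\mathrm{sh}}$ is not convex.

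Note that Theorem~\ref{thm:convex} does not apply directly to $D_{\mathrm{sh}}$, since it is not convex. This is why all the Finsler properties are transported from $D$ through the biholomorphism $\Phi$ rather than derived from Theorem~\ref{thm:convex} itself.
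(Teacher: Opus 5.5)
Your proof is correct and is essentially the paper's argument. Both transport smoothness, strong pseudoconvexity and the (weakly) K\"ahler--Finsler properties from Proposition~\ref{prop:balanced} through the shear, using $\kappa_{D_{\mathrm{sh}}}=\Psi^*\kappa_D$ with $\Psi=\Phi^{-1}$, and both prove nonconvexity with an explicit pair of points whose midpoint leaves the domain. The only real difference is the witness: you use $(2t^2,\pm t)$, which $\Psi$ sends to $(0,\pm t)$, with $t^2\in(1/2,1)$, while the paper uses $p_\pm=(6/5,\pm1/2)$; you also spell out the boundedness and boundary-regularity details more explicitly than the paper does.
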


\begin{proof}
Let \(D\) be the domain in Proposition~\ref{prop:balanced}, and set
\[
f(\zeta,\eta)=|\zeta|^2+|\eta|^2+|\zeta|^2|\eta|^2.
\]
The polynomial automorphism
\[
\Phi(\zeta,\eta)=(\zeta+2\eta^2,\eta),
\qquad
\Psi=\Phi^{-1},\quad
\Psi(z,w)=(z-2w^2,w),
\]
maps \(D\) onto \(D_{\mathrm{sh}}\).
It follows that \(D_{\mathrm{sh}}\) is bounded and has
real-analytic strongly pseudoconvex boundary.

To see that it is not convex, take
\[
p_\pm=\left(\frac65,\pm\frac12\right).
\]
Then
\[
f(\Psi(p_\pm))=\frac{69}{80}<1,
\qquad
f\left(\Psi\left(\frac{p_++p_-}{2}\right)\right)
=\frac{36}{25}>1.
\]
Thus \(p_\pm\in D_{\mathrm{sh}}\), whereas their midpoint is
outside \(D_{\mathrm{sh}}\).

Using biholomorphic invariance, we have
\[
\kappa_{D_{\mathrm{sh}}}=\Psi^*\kappa_D.
\]
Smoothness, strong pseudoconvexity, and the weakly
K\"ahler--Finsler and K\"ahler--Finsler conditions are preserved
under biholomorphisms. The assertions therefore follow from
Proposition~\ref{prop:balanced}.
\end{proof}

\begin{remark}
For \(t>0\), Ge \cite{Ge} considered the bounded strongly convex Reinhardt
domain
\[
\Omega_t=\{(z_1,z_2)\in\C^2:|z_1|^2+|z_2|^2+t|z_1|^4<1\}.
\]
Its Kobayashi metric is smooth, strongly pseudoconvex and weakly
K\"ahler--Finsler, but is not K\"ahler--Finsler. Ge also obtained the
explicit formula
\[
\kappa_{\Omega_t}(0;v)^2
=\frac{\|v\|^2+\sqrt{\|v\|^4+4t|v_1|^4}}2,\qquad v=(v_1,v_2)\in\C^2.
\]
This provides another example of a smooth weakly K\"ahler--Finsler metric
that is not K\"ahler--Finsler, alongside the examples in this section.
\end{remark}

\bigskip
\noindent\textbf{Jinling Li:} School of Science, Jimei University,
Xiamen 361021, China\\
\textit{Email address:} \href{mailto:lijl@jmu.edu.cn}{lijl@jmu.edu.cn}

\medskip
\noindent\textbf{Longjie Li:} School of Mathematical Sciences,
Xiamen University, Xiamen 361005, China\\
\textit{Email address:} \href{mailto:1853248227@qq.com}{1853248227@qq.com}

\medskip
\noindent\textbf{Chunhui Qiu:} School of Mathematical Sciences,
Xiamen University, Xiamen 361005, China\\
\textit{Email address:} \href{mailto:chqiu@xmu.edu.cn}{chqiu@xmu.edu.cn}

\end{document}